\DeclareMathAlphabet{\pazocal}{OMS}{zplm}{m}{n}
\newtheorem{theorem}{Theorem}[section]
\newtheorem{lemma}[theorem]{Lemma}
\newtheorem{proposition}[theorem]{Proposition}
\newtheorem{corollary}[theorem]{Corollary}
 \newtheorem{main}{Theorem}
  \newtheorem{cmain}[main]{Corollary}
\theoremstyle{definition}
\newtheorem{definition}[theorem]{Definition}
\newtheorem{conjecture}[theorem]{Conjecture}
\theoremstyle{remark}
\numberwithin{equation}{section}
\newcommand{\N}{\ensuremath{\mathbb{N}}}
\renewcommand{\c}{ {\mathbf{c}}}
\renewcommand{\d}{ {\mathbf{d}}}
\newcommand{\bb}{\mathcal{B}}
\newcommand{\K}{\widetilde{K}}
\newcommand{\set}[1]{\left\{#1\right\}}
\newcommand{\ga}{\gamma}
\newcommand{\ep}{\varepsilon}
\newcommand{\f}{\infty}
\newcommand{\de}{\delta}
\newcommand{\ee}{\mathcal{E}}
\newcommand{\lle}{\preccurlyeq}
\newcommand{\lge}{\succcurlyeq}
\newcommand{\si}{\sigma}
\newcommand{\ra}{\rightarrow}
\begin{document}

\title[Two bifurcation sets]{Two bifurcation sets arising from the beta transformation with a hole at $0$}
 \author{Simon Baker}
\address[S. Baker]{Mathematics institute, University of Warwick, Coventry, CV4 7AL, UK}
\email{simonbaker412@gmail.com}

\author{Derong Kong}
\address[D. Kong]{College of Mathematics and Statistics, Chongqing University,  401331, Chongqing, P.R.China}
\email{derongkong@126.com}

\date{\today}
\dedicatory{}


\subjclass[2010]{Primary: 37B10, Secondary: 28A78, 11A63}

\begin{abstract}
  Given $\beta\in(1,2],$ the $\beta$-transformation $T_\beta: x\mapsto \beta x\pmod 1$ on the circle $[0, 1)$ with a hole $[0, t)$ was investigated by Kalle et al.~(2019). They described the set-valued bifurcation set 
  \[
  \mathcal E_\beta:=\{t\in[0, 1): K_\beta(t')\ne K_\beta(t)~\forall t'>t\},
  \]
  where $K_\beta(t):=\{x\in[0, 1): T_\beta^n(x)\ge t~\forall n\ge 0\}$ is the survivor set. In this paper we investigate the dimension bifurcation set 
  \[
  \mathcal B_\beta:=\{t\in[0, 1): \dim_H K_\beta(t')\ne \dim_H K_\beta(t)~\forall t'>t\},
  \]
  where $\dim_H$ denotes the Hausdorff dimension.
  We show that if $\beta\in(1,2]$ is a multinacci number then the two bifurcation sets $\mathcal B_\beta$ and $\mathcal E_\beta$ coincide. Moreover we give a complete characterization of these two sets. As a corollary of our main result we prove that for $\beta$ a multinacci number we have $\dim_H(\mathcal E_\beta\cap[t, 1])=\dim_H K_\beta(t)$ for any $t\in[0, 1)$. This confirms a conjecture of Kalle et al.~for $\beta$ a multinacci number.
\end{abstract}

\keywords{Bifurcation sets; beta transformation; local dimension; survivor set.}
\maketitle

\section{Introduction}\label{s1}
Given $\beta\in(1,2]$, the $\beta$-transformation $T_\beta$ on the circle $\mathbb R/\mathbb Z\sim [0, 1)$ is defined by 
\[
T_\beta: [0, 1)\ra [0, 1);\quad x\mapsto \beta x\pmod 1.
\]
Following the pioneering work of R\'{e}nyi \cite{Renyi_1957} and Parry \cite{Parry_1960} there has been a great interest in the study of $T_\beta$. In general, the system $\Phi_\beta=([0, 1), T_\beta)$ does not admit a Markov partition, this makes describing the dynamics of $\Phi_\beta$ more challenging.  


When $\beta=2$,   Urba\'nski \cite{Urbanski_1986, Urbanski-87} considered the open dynamical system under the doubling map $T_2$ with a hole at zero. More precisely, for $t\in[0, 1)$ let 
\[
K_2(t):=\set{x\in[0, 1): T_2^n(x)\ge t~\forall \; n\ge 0}.
\]
He showed that the dimension function $t\mapsto \eta_2(t):=\dim_H K_2(t)$ is a Devil's staircase on $[0, 1)$, in particular $\eta_2$ satisfies the following properties: (i) $\eta_2$ is decreasing and continuous on $[0, 1)$; (ii) $\eta_2$ is locally constant   almost everywhere on $[0, 1)$; and (iii) $\eta_2$ is not constant on $[0, 1)$. Here and throughout the paper $\dim_H$  denotes the Hausdorff dimension. Moreover, he investigated the bifurcation sets
\begin{align*}
\ee_2&:=\set{t\in[0, 1): K_2(t')\ne K_2(t)~\forall\; t'>t},\quad
\bb_2:=\set{t\in[0, 1): \eta_2(t')\ne \eta_2(t)~\forall\; t'>t}.
\end{align*}
Clearly, $\bb_2\subseteq\ee_2$. In \cite{Urbanski_1986} Urba\'nski showed that $\bb_2=\ee_2$, and  its topological closure $\overline{\bb_2}$ is a Cantor set, i.e., a non-empty compact set that has neither isolated   nor interior points. Furthermore, the following local dimension property was shown to hold:
$
\lim_{r\to 0}\dim_H(\ee_2\cap(t-r, t+r))=\eta_2(t)$ for all  $t\in \ee_2. $
Recently, Carminati and Tiozzo in \cite{Carminati-Tiozzo-17} showed that the local H\"older exponent of the dimension function $\eta_2$ at any $t\in \ee_2$ equals $\eta_2(t)$. 

Inspired  by the work of Urba\'nski \cite{Urbanski_1986, Urbanski-87}, Kalle et al. in ~\cite{Kalle-Kong-Langeveld-Li-18} considered the analogous problem for the $\beta$-transformation with a hole $[0, t)$. More precisely, for $t\in[0, 1)$ they investigated the survivor  set
\[
K_\beta(t):=\set{x\in[0, 1): T_\beta^n(x)\ge t~\forall\; n\ge 0},
\]
and showed that the dimension function $t\mapsto  \dim_H K_\beta(t)$ is also a Devil's staircase on $[0, 1)$. Furthermore, they characterized the \emph{set-valued bifurcation set} 
\[
\ee_\beta:=\set{t\in [0, 1): K_\beta(t')\ne K_\beta(t)~\forall \; t'>t},
\]
and proved that $\ee_\beta$ is a Lebesgue null set of  full Hausdorff dimension for any $\beta\in(1,2)$.  Interestingly, they showed that  $\ee_\beta$ contains infinitely many isolated points for Lebesgue almost every $\beta\in(1, 2)$. This is in contrast to the case where $\beta=2$ and $\ee_2$ has no isolated points.

Since for each $\beta\in(1,2)$ the dimension function $\eta_\beta: t\mapsto \dim_H K_\beta(t)$ is a Devil's staircase,   it is natural to consider the \emph{dimension bifurcation set} 
\[
\bb_\beta:=\set{t\in[0, 1): \eta_\beta(t')\ne \eta_\beta(t)~\forall\; t'>t}.
\]
This set records those $t$ for which the dimension function $\eta_\beta$ has a `change' within any right neighborhood. 
Since $\eta_\beta$ is continuous,  $\bb_\beta$ cannot have isolated points. On the other hand,  the set-valued bifurcation set $\ee_\beta$ contains (infinitely many) isolated points for Lebesgue almost every $\beta\in(1,2)$. So in general we cannot expect the coincidence of the two bifurcation sets 
$\bb_\beta$ and $\ee_\beta$. That being said, in this paper we show that if $\beta$ is a multinacci number, i.e.,   the unique root in $(1,2)$ of the equation
\[
x^{m+1}=x^m+x^{m-1}+\cdots+x+1
\] 
for some $m\in\N$, 
then the two bifurcation sets indeed coincide.

When $\beta\in(1,2)$ is a multinacci number, the following result for the set-valued bifurcation set $\ee_\beta$  was established in \cite[Theorems C and D]{Kalle-Kong-Langeveld-Li-18}. We record it here for later use.
\begin{theorem}[\cite{Kalle-Kong-Langeveld-Li-18}]\label{th:kkll}
Let $\beta\in(1,2]$ be  a multinacci number. Then the topological closure $\overline{\ee_\beta}$ is a Cantor set. Furthermore, $\max\overline{\ee_\beta}=1- 1/\beta$. 
\end{theorem}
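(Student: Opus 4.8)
\emph{Plan.} The idea is to transfer everything to the symbolic dynamics of the $\beta$-shift, which for a multinacci number is a subshift of finite type, and then to read off both assertions by lexicographic bookkeeping.

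Write $\mathbf{b}(t)=(\al_i(t))\in\{0,1\}^\N$ for the quasi-greedy $\beta$-expansion of $t$ and $d(x)$ for the greedy expansion of $x\in[0,1)$. For a multinacci number with exponent $m$ the quasi-greedy expansion of $1$ is $\alpha(1)=(1^m0)^\f$; by Parry's criterion $d(x)$ is characterised among $\{0,1\}$-sequences by $\si^n(d(x))\prec\alpha(1)$ for all $n\ge0$ (in particular it contains no block $1^{m+1}$), while the quasi-greedy expansions satisfy only $\si^n(\mathbf{b}(t))\lle\alpha(1)$. Since $\si\circ d=d\circ T_\beta$ and $\pi_\beta$ is order preserving on greedy expansions, one checks
\[
x\in K_\beta(t)\quad\Longleftrightarrow\quad \si^n(d(x))\lge\mathbf{b}(t)\ \text{ for all }n\ge0 ,
\]
so $K_\beta(t)=\pi_\beta(X_t)$ where $X_t:=\set{(a_i):\ \si^n((a_i))\prec\alpha(1)\ \text{and}\ \si^n((a_i))\lge\mathbf{b}(t)\ \forall\,n}$. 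Two facts follow. \textbf{(i)} If $\mathbf{b}(t)$ is \emph{shift-minimal}, i.e.\ $\si^n(\mathbf{b}(t))\lge\mathbf{b}(t)$ for all $n$, and all its shifts are $\prec\alpha(1)$, then $\mathbf{b}(t)=d(t)\in X_t$, so $t\in K_\beta(t)\setminus K_\beta(t')$ for every $t'>t$ and $t\in\ee_\beta$. \textbf{(ii)} If $\mathbf{b}(t)$ is not shift-minimal then $t\notin\ee_\beta$: this is trivial if $X_t=\emptyset$, and otherwise the lexicographically least element $y^*$ of $X_t$ is itself shift-minimal (each shift of $y^*$ again lies in $X_t$) and satisfies $y^*=\mathbf{b}(t')$ with $t':=\pi_\beta(y^*)>t$; since every element of $X_t$ has all shifts $\lge y^*$, one gets $X_{t'}=X_t$, hence $K_\beta(t')=K_\beta(t)$.

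\emph{The value $1-1/\beta$.} A short computation gives $d(1-1/\beta)=0\,1^m\,0^\f$ and $\mathbf{b}(1-1/\beta)=0\,1^{m-1}\,(0\,1^m)^\f$. Let $t>1-1/\beta$. If $\mathbf{b}(t)$ begins with $1$ then $\mathbf{b}(t)=1^r0\cdots$ with $1\le r\le m$, so $\si^r(\mathbf{b}(t))$ begins with $0$ and $\si^r(\mathbf{b}(t))\prec\mathbf{b}(t)$. If $\mathbf{b}(t)$ begins with $0$ then $\si(\mathbf{b}(t))\succ 1^{m-1}0(1^m0)^\f$, and combining this with $\si(\mathbf{b}(t))\lle\alpha(1)$ forces $\mathbf{b}(t)=0\,1^m\,0\,\ep$ with $\ep\lle\alpha(1)$; if $\ep=\alpha(1)$ then $t=1/\beta$ and $X_t=\emptyset$, while if $\ep\prec\alpha(1)$ then shift-minimality would require $\si^{m+1}(\mathbf{b}(t))=0\ep\lge 0\,1^m\,0\,\ep$, i.e.\ $\ep\lge 1^m0\ep$, which iterates to $\ep=\alpha(1)$, a contradiction. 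In every case $\mathbf{b}(t)$ fails to be shift-minimal or $X_t=\emptyset$, so $t\notin\ee_\beta$ by (ii); hence $\ee_\beta\subseteq[0,1-1/\beta]$ and $\overline{\ee_\beta}\subseteq[0,1-1/\beta]$. For the matching lower bound set $\mathbf{b}(t_k):=\big(0\,1^{m-1}\,(0\,1^m)^{k-1}\big)^\f$; writing it as $0\,1^{a_1}\,0\,1^{a_2}\,0\cdots$ with run-pattern $(a_i)=\big((m-1)\,m^{\,k-1}\big)^\f$, all $1$-runs have length $\le m$ and every block $1^m0$ is eventually followed by a block $1^{m-1}$, so all shifts are $\prec\alpha(1)$; and since $(a_i)$ is shift-minimal, so is $\mathbf{b}(t_k)$. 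By (i), $t_k\in\ee_\beta$, and since $(a_i)$ agrees with $(m-1)\,m\,m\,\cdots$ on the first $k$ coordinates and is lexicographically smaller, $t_k\uparrow 1-1/\beta$. Therefore $\max\overline{\ee_\beta}=1-1/\beta$.

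\emph{$\overline{\ee_\beta}$ is a Cantor set.} It is non-empty ($0\in\ee_\beta$) and compact (closed and bounded). It has empty interior: by monotonicity of $t\mapsto K_\beta(t)$, whenever $K_\beta(t')=K_\beta(t)$ for some $t'>t$ the whole interval $(t,t')$ misses $\ee_\beta$, hence misses $\overline{\ee_\beta}$; since $\ee_\beta$ contains no interval (it is Lebesgue-null by \cite{Kalle-Kong-Langeveld-Li-18}), the complement of $\overline{\ee_\beta}$ is dense. Finally $\overline{\ee_\beta}$ has no isolated point, for which it suffices that every $t\in\ee_\beta$ be a limit of points of $\ee_\beta\setminus\{t\}$: given the shift-minimal $\mathbf{b}(t)$ with all shifts $\prec\alpha(1)$, one concatenates copies of a sufficiently long prefix $v$ of $\mathbf{b}(t)$, chosen so that the borders of $v$ destroy neither shift-minimality nor the strict inequality below $\alpha(1)$ at the junctions, thereby producing $t_N\in\ee_\beta$ with $t_N\to t$. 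I expect this last step to be the main obstacle: the admissible choice of $v$ rests on a careful analysis of the repetition structure of $\mathbf{b}(t)$, exactly as in Urba\'nski's treatment of $\beta=2$.
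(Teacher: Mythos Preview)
The paper does not actually prove Theorem~\ref{th:kkll}; it is quoted from \cite{Kalle-Kong-Langeveld-Li-18} (their Theorems~C and~D) and used as a black box. So there is no ``paper's proof'' to compare against, and your write-up must stand on its own.

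With that in mind, two genuine gaps remain in your argument. The most serious one you flag yourself: the claim that $\overline{\ee_\beta}$ has no isolated points is only sketched. You propose to periodise a long prefix $v$ of $\mathbf b(t)$, but you do not explain how to choose $v$ so that $(v)^\infty$ is simultaneously shift-minimal \emph{and} has all shifts $\prec\alpha(1)$; both conditions can fail at the seam, and for periodic $\mathbf b(t)$ the construction produces only $t$ itself. A workable route is closer to what the present paper does in Lemma~\ref{lem:inclusion-C} (Case~II) and Lemma~\ref{lem:basic inequality}: show that the right endpoint $t_R$ of every $\beta$-Lyndon interval lies in $\ee_\beta$, and that such endpoints accumulate from the right at every point of $\ee_\beta$. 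This replaces the delicate seam analysis by the algebra of Lyndon words.

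The second gap is smaller but real. For empty interior you write ``since $\ee_\beta$ contains no interval \dots\ the complement of $\overline{\ee_\beta}$ is dense''. That inference is false as stated: the closure of a Lebesgue-null set can be the whole interval. What actually works is the sentence you wrote just before: whenever $t\notin\ee_\beta$ there is $t'>t$ with $K_\beta(t')=K_\beta(t)$, and then the open interval $(t,t')$ is disjoint from $\ee_\beta$, hence from $\overline{\ee_\beta}$. Combined with the density of $[0,1)\setminus\ee_\beta$ (which \emph{does} follow from $\ee_\beta$ being null), this gives that every open subinterval of $[0,1)$ meets the complement of $\overline{\ee_\beta}$. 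You should make that two-step argument explicit. A further technical point: in your proof of (ii) you take $y^*=\min X_t$, but $X_t$ is defined by the \emph{open} condition $\sigma^n(\cdot)\prec\alpha(1)$ and is not obviously closed; you should either check that the infimum cannot acquire a tail $(1^m0)^\infty$ while still satisfying $\sigma^n(\cdot)\lge\mathbf b(t)$, or work instead with the closed set $\widetilde K_\beta(t)$ used in the paper.
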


In order to give a complete description of the dimension bifurcation set $\bb_\beta$  we introduce a class of basic intervals.
\begin{definition}\label{def:lyndon word-interval}
Let $\beta\in(1,2]$. A word 
  $s_1\ldots s_m$ is called \emph{$\beta$-Lyndon} if 
  \begin{align*}
  &s_{i+1}\ldots s_{m}\succ s_1\ldots s_{m-i}\quad \forall~ 1\le i<m,\quad\textrm{and}\quad
   \si^n((s_1\ldots s_m)^\f)\prec \de(\beta)\quad\forall ~n\ge 0.
  \end{align*} 
Accordingly, an interval $[t_L, t_R)\subset[0, 1)$ is called a \emph{$\beta$-Lyndon interval}  if there exists a $\beta$-Lyndon word $s_1\ldots s_m$  such that 
\[
t_L=\sum_{i=1}^m\frac{s_i}{\beta^i}\quad \textrm{and} \quad t_R=\frac{\beta^m }{\beta^m-1} \cdot t_L.
\]
\end{definition} In the above definition $\succ$ corresponds to the usual lexicographic ordering and $\delta(\beta)$ is the quasi-greedy $\beta$-expansion of $1.$ These are both defined formally in the next section.

We will show that the $\beta$-Lyndon intervals are pairwise disjoint for all $\beta\in(1,2]$, and when $\beta$ is multinacci they cover the interval $[0, 1-1/\beta)$ up to a Lebesgue null set.
The latter statement can be seen as a consequence of our main result for the coincidence of the two bifurcation sets, which we state below. 
\begin{main}\label{main:1}
Let $\beta\in(1,2]$ be a multinacci number. Then

\begin{align*}
\bb_\beta=\ee_\beta&=\left[0,1-\frac{1}{\beta}\right)\setminus\bigcup[t_L, t_R)\\
&=\set{t\in[0, 1): \lim_{r\to 0}\dim_H(\bb_\beta\cap(t, t+r))=\dim_H K_\beta(t)>0},
\end{align*}
where the union is taken over all  pairwise disjoint $\beta$-Lyndon intervals. 
\end{main}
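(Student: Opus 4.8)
The plan is to establish the chain of three equalities by proving several inclusions, with the symbolic dynamics of $\beta$-expansions as the main tool. The key object is the survivor set written in symbolic form: $K_\beta(t)$ corresponds, via the greedy expansion map, to the set of sequences $(x_n)$ with $\sigma^n((x_i)) \succeq$ (the greedy expansion of $t$) for all $n$, and the Hausdorff dimension of $K_\beta(t)$ equals $\frac{1}{\log\beta}$ times the topological entropy of this subshift. Since $\beta$ is multinacci, $\delta(\beta) = (1^m 0)^\infty$ up to the usual conventions, which makes the admissibility condition in Definition~\ref{def:lyndon word-interval} transparent and lets us compute entropies of the relevant subshifts of finite type explicitly.

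The first main step is to show $\left[0,1-\tfrac1\beta\right)\setminus\bigcup[t_L,t_R) \subseteq \bb_\beta$. Here I would take $t$ outside all the $\beta$-Lyndon intervals and show that $\eta_\beta$ is not eventually constant to the right of $t$; the idea is that a right-neighbourhood on which $\eta_\beta$ were constant would have to be contained in the set where the dimension function is locally constant, and by the Devil's staircase structure such plateaux are precisely (the closures of) the $\beta$-Lyndon intervals — so $t$ would lie in the closure of one of them, and a separate argument (using that $t$ is itself not in any half-open Lyndon interval) forces $t$ to be the \emph{left} endpoint $t_L$ of one, hence in $\bb_\beta$ anyway, or the limit of left endpoints. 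The reverse inclusion $\bb_\beta \subseteq \ee_\beta$ is immediate since $K_\beta(t') \ne K_\beta(t)$ whenever $\eta_\beta(t') \ne \eta_\beta(t)$; and $\bb_\beta \subseteq \left[0,1-\tfrac1\beta\right)$ follows from Theorem~\ref{th:kkll} together with the fact that $\eta_\beta \equiv 0$ on $[1-1/\beta, 1)$ (the survivor set there is at most countable) combined with continuity of $\eta_\beta$. So the crux is the opposite direction: $\ee_\beta \subseteq \left[0,1-\tfrac1\beta\right)\setminus\bigcup[t_L,t_R)$. For this I would invoke the characterization of $\ee_\beta$ from \cite{Kalle-Kong-Langeveld-Li-18}, which identifies $t \in \ee_\beta$ with a combinatorial condition on the greedy expansion of $t$ (roughly, that it is not "eventually periodic with a Lyndon-type period" in the precise sense encoded by the definition), and then check directly that each open-half Lyndon interval $(t_L, t_R)$ — and also the interval $[1-1/\beta,1)$ — consists of points failing that condition, while the left endpoints $t_L$ satisfy it.

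The second main step is the local dimension formula, i.e. that $\bb_\beta$ equals the set of $t$ with $\lim_{r\to0}\dim_H(\bb_\beta \cap (t,t+r)) = \dim_H K_\beta(t) > 0$. The inclusion $\supseteq$ is easy: if the limit equals a positive number then in particular $\bb_\beta \cap (t,t+r) \ne \emptyset$ for all $r$, so $\eta_\beta$ changes in every right-neighbourhood of $t$ and $t \in \bb_\beta$. For $\subseteq$, fix $t \in \bb_\beta$; the lower bound $\liminf_{r\to 0}\dim_H(\bb_\beta \cap(t,t+r)) \ge \eta_\beta(t)$ is the substantive point and I would obtain it by a self-similarity/renormalization argument: near $t$ the Lyndon intervals and the surviving subshift reproduce a scaled copy of the global picture, so one can embed inside $\bb_\beta \cap (t,t+r)$ a Cantor-like set built from admissible blocks whose entropy approaches that of the full survivor subshift at $t$; this is the same mechanism by which $\overline{\ee_\beta}$ is shown to be a Cantor set in Theorem~\ref{th:kkll}, pushed to a dimension estimate. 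The matching upper bound $\dim_H(\bb_\beta \cap (t,t+r)) \le \dim_H K_\beta(t-\text{something}) \to \eta_\beta(t)$ uses monotonicity of $\eta_\beta$ and that $\bb_\beta \cap (t, t+r) \subseteq \bb_\beta \subseteq \ee_\beta$, whose elements near $t$ all have greedy expansions starting with a long common prefix forcing them into $K_\beta$ of a nearby parameter. Finally, that the limit is $> 0$ on $\bb_\beta$ follows because $\bb_\beta \subseteq [0, 1-1/\beta)$ and $\eta_\beta(t) > 0$ strictly for $t < 1-1/\beta$.

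I expect the main obstacle to be the lower bound in the local dimension statement, $\liminf_{r\to0}\dim_H(\bb_\beta\cap(t,t+r)) \ge \eta_\beta(t)$, since it requires producing enough elements of $\bb_\beta$ (not merely of $\ee_\beta$, and not merely points where $\eta_\beta$ takes a given value) arbitrarily close to $t$ on the right, with controlled entropy — this is where the careful combinatorial construction of Lyndon words concatenated in a self-similar fashion, together with the entropy continuity for subshifts of finite type approximating the survivor subshift, will have to be carried out in detail.
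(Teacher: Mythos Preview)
Your chain of inclusions runs in the opposite cyclic direction from the paper's, which is fine in principle; but the step
\[
\left[0,1-\tfrac{1}{\beta}\right)\setminus\bigcup[t_L,t_R)\ \subseteq\ \bb_\beta
\]
has a genuine gap. Your argument here is that if $\eta_\beta$ were constant on some $(t,t+r)$ then, ``by the Devil's staircase structure, such plateaux are precisely the closures of the $\beta$-Lyndon intervals''. But that last clause is exactly what the theorem asserts; the Devil's staircase result from \cite{Kalle-Kong-Langeveld-Li-18} only tells you $\eta_\beta$ is continuous, decreasing, and locally constant almost everywhere --- it says nothing about \emph{which} intervals are the plateaux. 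A priori a plateau of $\eta_\beta$ could be strictly larger than a single Lyndon interval (it could swallow the right endpoint $t_R$ and continue), and nothing you have written rules this out.

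What is missing is the mechanism that forces $\eta_\beta(t')<\eta_\beta(t_R)$ for every $t'>t_R$. The paper supplies this via a key technical lemma you do not mention: for $\beta$ multinacci and $[t_L,t_R)$ a $\beta$-Lyndon interval, the symbolic survivor set $\K_\beta(t_R)$ is a \emph{transitive} subshift of finite type (Lemma~\ref{lem:transitivity}). Once you know this, the standard fact that any proper subshift of a transitive SFT has strictly smaller entropy (\cite[Corollary 4.4.9]{Lind_Marcus_1995}) immediately gives $h(\K_\beta(t'))<h(\K_\beta(t_R))$, hence $\eta_\beta(t')<\eta_\beta(t_R)$. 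The paper uses this to prove $\ee_\beta\subseteq\bb_\beta$ directly (Lemma~\ref{lem:inclusion-C}), handling the periodic and aperiodic cases separately; your route would equally be repaired by it. Incidentally, the same transitivity lemma is what makes the local-dimension lower bound go through cleanly (Lemma~\ref{lem:local dimension}), so the step you flagged as the main obstacle is in fact lighter than the one you glossed over.
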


By Theorem \ref{main:1}  it follows that the topological closure $[t_L, t_R]$ of each $\beta$-Lyndon interval is indeed a  maximal interval where the dimension function $\eta_\beta$  is constant. As a corollary of Theorem \ref{main:1} we confirm a conjecture of \cite{Kalle-Kong-Langeveld-Li-18} for $\beta$ a multinacci number. 
\begin{cmain}
\label{cor:1}
If $\beta\in(1,2]$ is a multinacci number, then 
\[
\dim_H(\ee_\beta\cap[t, 1])=\dim_H K_\beta(t)\quad\forall ~t\in[0, 1). 
\]
\end{cmain}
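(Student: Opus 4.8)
To prove Corollary \ref{cor:1}, the plan is to establish, for an arbitrary $t\in[0,1)$, the two inequalities $\dim_H(\ee_\beta\cap[t,1])\le\dim_H K_\beta(t)$ and $\dim_H(\ee_\beta\cap[t,1])\ge\dim_H K_\beta(t)$. Two auxiliary facts are used throughout: (a) the dimension function $\eta_\beta$ is continuous, decreasing, and identically $0$ on $[1-1/\beta,1)$ (the vanishing is immediate from the symbolic form of $K_\beta$, and can also be deduced from Theorem \ref{main:1}); and (b) every $s\in\ee_\beta$ satisfies $T_\beta^n(s)\ge s$ for all $n\ge 0$, which follows from the symbolic characterization of the set-valued bifurcation set in \cite{Kalle-Kong-Langeveld-Li-18} (the quasi-greedy expansion of a point of $\ee_\beta$ is lexicographically no larger than any of its shifts). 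From (b) the upper bound is immediate: if $s\in\ee_\beta$ and $s\ge t$, then $T_\beta^n(s)\ge s\ge t$ for all $n$, so $s\in K_\beta(t)$; hence $\ee_\beta\cap[t,1]\subseteq K_\beta(t)$, and $\dim_H(\ee_\beta\cap[t,1])\le\dim_H K_\beta(t)$.

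For the lower bound, if $\dim_H K_\beta(t)=0$ then the upper bound already forces $\dim_H(\ee_\beta\cap[t,1])=0$ and we are done; by (a) this disposes of every $t\in[1-1/\beta,1)$, so we may assume $\dim_H K_\beta(t)>0$ and, again by (a), $t<1-1/\beta$. Suppose first that $t\in\ee_\beta$. By Theorem \ref{main:1} the quantity $\dim_H(\bb_\beta\cap(t,t+r))$ tends to $\dim_H K_\beta(t)$ as $r\to 0^+$; since this quantity is non-decreasing in $r$, it is at least $\dim_H K_\beta(t)$ for every $r>0$. As $\bb_\beta=\ee_\beta$ and $\bb_\beta\cap(t,t+r)\subseteq\ee_\beta\cap[t,1]$, we conclude $\dim_H(\ee_\beta\cap[t,1])\ge\dim_H K_\beta(t)$.

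Suppose next that $t\notin\ee_\beta$. By Theorem \ref{main:1}, $t$ lies in some $\beta$-Lyndon interval $[t_L,t_R)$, and by the remark following Theorem \ref{main:1} the function $\eta_\beta$ is constant on its closure $[t_L,t_R]$; hence $\dim_H K_\beta(t)=\eta_\beta(t_R)>0$, and (a) forces $t_R<1-1/\beta$. I would then show that $t_R\in\ee_\beta$. If not, then by Theorem \ref{main:1} the point $t_R$ lies in another $\beta$-Lyndon interval; since the $\beta$-Lyndon intervals are pairwise disjoint and $[t_L,t_R)$ already contains all points immediately below $t_R$, this second interval must have $t_R$ as its left endpoint. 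Its closure and $[t_L,t_R]$ would then be two maximal intervals of constancy of $\eta_\beta$ that share the endpoint $t_R$, which is impossible: by continuity of $\eta_\beta$ they would carry the same value, so their union would be a strictly larger interval of constancy, contradicting maximality. Therefore $t_R\in\ee_\beta$, and applying the case already treated to $t_R$, together with the inclusion $\ee_\beta\cap[t_R,1]\subseteq\ee_\beta\cap[t,1]$, yields $\dim_H(\ee_\beta\cap[t,1])\ge\eta_\beta(t_R)=\dim_H K_\beta(t)$, which completes the argument.

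Given Theorem \ref{main:1}, the corollary is short, and I expect the only genuine subtlety to be the case $t\notin\ee_\beta$: the local dimension identity of Theorem \ref{main:1} is a statement about points of $\bb_\beta=\ee_\beta$, and one must transport it across the $\beta$-Lyndon intervals to reach an arbitrary $t$. The key point is that the right endpoint $t_R$ of the relevant $\beta$-Lyndon interval returns to $\ee_\beta$, which rests on the facts --- both part of the discussion around Theorem \ref{main:1} --- that the closure of each $\beta$-Lyndon interval is a maximal interval of constancy of $\eta_\beta$ and that $\eta_\beta$ is continuous, so that no two such closed intervals can abut. A secondary point needing care is the inclusion $\ee_\beta\cap[t,1]\subseteq K_\beta(t)$ in (b), which has to be read off from the symbolic characterization of $\ee_\beta$ rather than directly from its definition.
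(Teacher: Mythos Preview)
Your argument is correct and follows essentially the same route as the paper: the inclusion $\ee_\beta\cap[t,1]\subseteq K_\beta(t)$ (via Proposition \ref{prop:property of dim-K(t)}\,(i)) gives the upper bound, and the local dimension statement from Theorem \ref{main:1} (equivalently Lemma \ref{lem:local dimension}) gives the lower bound, with the case $t\in[t_L,t_R)$ handled by passing to $t_R$ and using $\eta_\beta(t)=\eta_\beta(t_R)$. The only difference is cosmetic: to see that $t_R\in\ee_\beta$ you argue by contradiction through maximality of constancy intervals, whereas the paper has already recorded (just after Proposition \ref{prop:coincidence}) that the \emph{closed} $\beta$-Lyndon intervals are pairwise disjoint, which together with $t_R<1-1/\beta$ immediately places $t_R$ outside $\bigcup[t_L',t_R')$ and hence in $\ee_\beta$.
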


The rest of the paper is organized as follows.  In the next section we recall some properties from symbolic dynamics and the dimension formula for the survivor set $K_\beta(t)$. The proof of Theorem \ref{main:1} and Corollary \ref{cor:1} will be given in Section \ref{sec:proof of th-1}. In the final section we make some remarks and point out that the method of proof for Theorem \ref{main:1} can be applied to some other special values of $\beta$.

 \section{Preliminaries and $\beta$-Lyndon intervals}\label{sec:symbolic-dimension}

 Given $\beta\in(1,2]$, for each $x\in I_\beta:=[0, 1/(\beta-1)]$ there exists a sequence $(d_i)=d_1d_2\ldots\in\set{0, 1}^\N$ such that 
 \[
 x=\sum_{i=1}^\f\frac{d_i}{\beta^i}=:((d_i))_\beta.
 \] 
The sequence $(d_i)$ is called a \emph{$\beta$-expansion} of $x$. Sidorov \cite{Sidorov_2003} showed that for $\beta\in(1,2)$ Lebesgue almost every $x\in I_\beta$ has a continuum of $\beta$-expansions. This is rather different from the case when $\beta=2$ where every number in $I_2=[0, 1]$ has a unique dyadic expansion except for countably many points that have precisely two expansions. Given $x\in I_\beta$, among all of its $\beta$-expansions  let $b(x, \beta)=(b_i(x, \beta))$ be the \emph{greedy} $\beta$-expansion of $x$, i.e., the lexicographically largest $\beta$-expansion of $x$. Such a sequence always exists and is generated by the orbit of $x$ under the map $T_{\beta}.$ Similarly, for $x\in(0, 1/(\beta-1)]$ let $a(x, \beta)=(a_i(x, \beta))$ be the \emph{quasi-greedy} $\beta$-expansion of $x$ (cf.~\cite{Daroczy_Katai_1993}), which is the lexicographically largest $\beta$-expansion of $x$ not ending with $0^\f$. Here for a word $\c$ we denote by $\c^\f:=\c\c\cdots$ the periodic sequence with periodic block $\c$. 
 Throughout the paper we will use the lexicographic order between sequences and words in the usual way. For example, for two sequences $(c_i), (d_i)\in\set{0, 1}^\N$ we write $(c_i)\prec (d_i)$ if $c_1<d_1$, or there exists $n>1$ such that $c_1\ldots c_{n-1}=d_1\ldots d_{n-1}$ and $c_n<d_n$. Furthermore, for two words $\c, \d$ we say $\c\prec \d$ if $\c 0^\f\prec \d 0^\f$. 
 
 For $\beta\in(1,2]$ let
\[
\de(\beta)=\de_1(\beta)\de_2(\beta)\ldots
\]
be the quasi-greedy $\beta$-expansion of $1$, i.e., $\de(\beta)=a(1, \beta)$. Let $\si$ be the left-shift on $\set{0, 1}^\N$ defined by $\si((c_i))=(c_{i+1})$. Then $b(T_\beta(x), \beta)=\si(b(x, \beta))$ for any $x\in[0, 1)$.  The following lexicographic characterizations of $\de(\beta)$ and the greedy expansion $b(x, \beta)$ are essentially due to Parry \cite{Parry_1960} (see also \cite{DeVries_Komornik_2008}). 

  \begin{lemma}\label{lem:greedy and quasi-greedy}
  \begin{enumerate}[{\rm(i)}]
 
  \item The map $\beta\mapsto \de(\beta)$ is a strictly increasing bijection from $(1, 2]$ onto the set of sequences $(\de_i)\in\set{0, 1}^\N$ not ending with $0^\f$ and  satisfying
  $
  \si^n((\de_i)) \lle (\de_i) ~\forall ~n\ge 0.
 $
  
  \item Let $\beta\in(1,2]$. Then the map $x\mapsto b(x, \beta)$ is a strictly increasing bijection from $[0, 1)$ onto the set of all sequences $(b_i)\in\set{0, 1}^\N$ satisfying
  $
  \si^n((b_i)) \prec \de(\beta)~\forall ~n\ge 0.
  $ 
  
  \item For any $\beta\in(1,2)$ the sequence $b(1,\beta)=(b_i)$ satisfies $\si^n((b_i))\prec \de(\beta)~\forall~ n\ge 1$.  
   \end{enumerate}
  \end{lemma}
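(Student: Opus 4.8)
The final statement to prove is Lemma \ref{lem:greedy and quasi-greedy}, comprising three parts on the lexicographic characterization of $\de(\beta)$ and the greedy expansion $b(x,\beta)$. Let me sketch a proof plan.

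\textbf{Overall approach.} The plan is to exploit the interplay between the greedy algorithm (which produces $b(x,\beta)$ and, via $\de(\beta)=a(1,\beta)$, the quasi-greedy expansion of $1$) and the left-shift $\si$, so that the admissibility conditions become shift-invariance statements checkable lexicographically. The key structural fact I would establish first is the \emph{conjugacy} $b(T_\beta(x),\beta)=\si(b(x,\beta))$, already recorded in the excerpt: applying $T_\beta$ corresponds to deleting the leading digit. Since $T_\beta$ maps $[0,1)$ into itself, iterating gives that every shift $\si^n(b(x,\beta))$ is again a greedy expansion of a point in $[0,1)$, which is the engine behind the self-referential inequalities $\si^n((b_i))\prec\de(\beta)$.

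\textbf{Part (ii).} I would prove this first, as (i) and (iii) lean on it. For the forward direction, suppose $(b_i)=b(x,\beta)$ for some $x\in[0,1)$. For each $n\ge 0$, $\si^n((b_i))=b(T_\beta^n(x),\beta)$ is the greedy expansion of a point $y=T_\beta^n(x)\in[0,1)$, so it suffices to show $b(y,\beta)\prec\de(\beta)$ for every $y\in[0,1)$. Since $y<1$ and $\de(\beta)=a(1,\beta)$ is the quasi-greedy (hence lexicographically largest non-terminating) expansion of $1$, the greedy expansion of any $y<1$ must be strictly dominated; this is where I would invoke the monotonicity of the greedy map and the relationship between greedy and quasi-greedy expansions of $1$ (greedy of $1$ is $\de(\beta)$ truncated or $\de(\beta)$ itself when non-terminating). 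For the converse, given $(b_i)$ with $\si^n((b_i))\prec\de(\beta)$ for all $n$, set $x:=((b_i))_\beta$; the admissibility condition guarantees $x\in[0,1)$ and that the greedy algorithm reproduces $(b_i)$ digit by digit, since at each step the ``carry'' never reaches $1$. The bijectivity and strict monotonicity follow because distinct admissible sequences give distinct reals (lexicographic order matches the order on $\R$ for greedy expansions) and the map is order-preserving by construction.

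\textbf{Parts (i) and (iii).} For (i), I would argue that $\de(\beta)=a(1,\beta)$ never ends in $0^\f$ by definition of quasi-greedy, and satisfies $\si^n(\de(\beta))\lle\de(\beta)$ by the same self-referential shift argument as in (ii), but with $\lle$ rather than $\prec$ because $1$ itself is now the base point (equality is attained at $n=0$, and the quasi-greedy expansion of $T_\beta^n$-images of $1$ can equal $\de(\beta)$). Strict monotonicity of $\beta\mapsto\de(\beta)$ follows since larger $\beta$ forces lexicographically larger quasi-greedy expansions of $1$; surjectivity onto the stated set of sequences is the delicate converse, recovering $\beta$ from an admissible non-terminating $(\de_i)$ via the equation $1=\sum\de_i\beta^{-i}$ and checking the resulting $\beta$ is well-defined in $(1,2]$. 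For (iii), when $\beta\in(1,2)$ the greedy expansion $b(1,\beta)$ is obtained from $\de(\beta)$; since $\beta<2$ forces $\de(\beta)\ne 1^\f$, the point $1$ has a greedy expansion ending in $0^\f$ or differing from $\de(\beta)$, and $\si^n(b(1,\beta))\prec\de(\beta)$ holds strictly for $n\ge 1$ because after the first shift we land at a point strictly below $1$.

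\textbf{Main obstacle.} The hardest part will be the surjectivity claim in (i): showing that \emph{every} sequence satisfying $\si^n((\de_i))\lle(\de_i)$ and not ending in $0^\f$ actually arises as $\de(\beta)$ for a genuine $\beta\in(1,2]$. This requires solving $\sum_{i=1}^\f\de_i\beta^{-i}=1$ for $\beta$ and verifying both that a unique such root exists in $(1,2]$ and that the quasi-greedy algorithm applied to $1$ at this $\beta$ regenerates exactly $(\de_i)$. The self-referential inequality is precisely the admissibility condition that makes the fixed-point consistent, but pinning down existence and uniqueness of $\beta$, together with the boundary behaviour near $\beta=2$ (where $\de(2)=1^\f$ is excluded), is the technical crux. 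Since these are classical results of Parry, I would cite \cite{Parry_1960} and \cite{DeVries_Komornik_2008} for the full details rather than reproving them, and focus the argument on the shift-conjugacy mechanism that unifies all three parts.
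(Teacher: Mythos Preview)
The paper does not prove this lemma at all: it is stated as a classical result ``essentially due to Parry'' and simply cites \cite{Parry_1960} and \cite{DeVries_Komornik_2008}. Your proposal therefore goes well beyond what the paper does, and your final remark---that you would cite these references for the full details rather than reprove them---is exactly the paper's approach. The sketch you give is a correct outline of the standard arguments (shift-conjugacy $b(T_\beta x,\beta)=\si(b(x,\beta))$, self-referential admissibility, and the surjectivity in (i) via solving $\sum\de_i\beta^{-i}=1$), and your identification of surjectivity in (i) as the technical crux is accurate.
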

  For $\beta\in(1,2]$ let $[t_L, t_R)$ be a $\beta$-Lyndon interval generated by a $\beta$-Lyndon word $s_1\ldots s_m$. Then by Definition \ref{def:lyndon word-interval} and Lemma \ref{lem:greedy and quasi-greedy} (ii) it follows that 
 \[
 b(t_L, \beta)=s_1\ldots s_m 0^\f\quad \textrm{and}\quad  b(t_R, \beta)=(s_1\ldots s_m)^\f. 
 \]
   \begin{lemma}\label{lem:Lyndon-interval}
   
 For any $\beta\in(1,2]$ the $\beta$-Lyndon intervals are pairwise disjoint.
 \end{lemma}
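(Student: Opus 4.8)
The plan is to argue by contradiction, using the greedy-expansion description of the endpoints recorded just before the lemma. Suppose $[t_L, t_R)$ and $[t_L', t_R')$ are two $\beta$-Lyndon intervals, generated by $\beta$-Lyndon words $s_1\ldots s_m$ and $u_1\ldots u_n$ respectively, with $[t_L,t_R)\cap[t_L',t_R')\neq\varnothing$; without loss of generality assume $t_L \le t_L'$, so that $t_L \le t_L' < t_R$, hence (by strict monotonicity of $x\mapsto b(x,\beta)$ from Lemma \ref{lem:greedy and quasi-greedy}(ii))
\[
s_1\ldots s_m 0^\f = b(t_L,\beta) \lle b(t_L',\beta) = u_1\ldots u_n 0^\f \prec (s_1\ldots s_m)^\f = b(t_R,\beta).
\]
First I would dispose of the case $t_L = t_L'$: then $u_1\ldots u_n 0^\f = s_1\ldots s_m 0^\f$, so the two Lyndon words are powers of a common word, and a short argument using the first (aperiodicity-type) condition $s_{i+1}\ldots s_m \succ s_1\ldots s_{m-i}$ in Definition \ref{def:lyndon word-interval} forces $m=n$ and the words to be equal, hence the intervals coincide. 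So assume $t_L < t_L'$, i.e. $s_1\ldots s_m 0^\f \prec u_1\ldots u_n 0^\f \prec (s_1\ldots s_m)^\f$.

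The heart of the argument is to derive a contradiction with the Lyndon condition on $u_1\ldots u_n$. From $s_1\ldots s_m 0^\f \prec u_1\ldots u_n 0^\f$ and $u_1\ldots u_n 0^\f \prec (s_1\ldots s_m)^\f$, I want to locate where $u_1\ldots u_n$ sits relative to the periodic sequence $(s_1\ldots s_m)^\f$. Let $k\ge 0$ be maximal such that $u_1\ldots u_{km}$ agrees with the prefix $(s_1\ldots s_m)^k$ of $(s_1\ldots s_m)^\f$ (so $k$ could be $0$; note $km < n$ is impossible to have $k$ arbitrarily large since $u_1\ldots u_n 0^\f \prec (s_1\ldots s_m)^\f$ strictly, but I must handle $n$ being a multiple of $m$ carefully). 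Writing $j = km$, the shifted word $u_{j+1}\ldots u_n$ then begins strictly below $s_1\ldots s_m$ in lexicographic order at the first place of disagreement, while on the other hand $u_1\ldots u_n 0^\f \succ s_1\ldots s_m 0^\f$ means $u_1\ldots u_n$ begins with or exceeds $s_1\ldots s_m$; combining these, the Lyndon rotation inequality $u_{i+1}\ldots u_n \succ u_1\ldots u_{n-i}$ applied with $i = j$ gives $u_{j+1}\ldots u_n \succ u_1\ldots u_{n-j}$. I would then compare $u_1\ldots u_{n-j}$ with $s_1\ldots s_{n-j}$ (a prefix of $(s_1\ldots s_m)^\f$, truncated appropriately) using $u_1\ldots u_n \succcurlyeq s_1\ldots s_m\ldots$, chasing the inequalities to conclude that $u_{j+1}\ldots u_n$ is forced to be $\succcurlyeq s_1\ldots s_m$-compatible in a way that contradicts $u_1\ldots u_n 0^\f \prec (s_1\ldots s_m)^\f$. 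The upshot is that no such $u_1\ldots u_n$ can exist.

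The main obstacle I anticipate is the bookkeeping with finite words versus their periodic extensions: the conditions mix a finite Lyndon word with the infinite periodic sequence $(s_1\ldots s_m)^\f$, and the edge cases — when $n$ is (or is not) a multiple of $m$, when the first disagreement between $u_1\ldots u_n 0^\f$ and $(s_1\ldots s_m)^\f$ occurs exactly at a block boundary, and when $u$ is itself periodic — all need slightly different handling. A clean way to organize this is to reduce everything to a single statement about sequences: if $\mathbf{w}^\f$ and $\mathbf{v}\,0^\f$ are two points with $\mathbf{v}$ satisfying the rotation inequality, then $\mathbf{v}\,0^\f \notin (\mathbf{w}0^\f,\mathbf{w}^\f)$ unless $\mathbf{v}$ is a power of $\mathbf{w}$; this is a purely combinatorial lemma about Lyndon-type words and can be proved by the minimality/rotation property (every proper rotation of an aperiodic Lyndon word strictly exceeds it), after which disjointness of the intervals is immediate.
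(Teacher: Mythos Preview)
Your approach is correct and essentially the same as the paper's: both locate the largest $k$ with $u_1\ldots u_{km}=(s_1\ldots s_m)^k$ and then derive a violation of the Lyndon rotation condition at the shift $j=km$. The edge cases you flag dissolve once you note (as the paper does at the outset) that the sandwiching $s_1\ldots s_m 0^\infty \prec u_1\ldots u_n 0^\infty \prec (s_1\ldots s_m)^\infty$ already forces $n>m$ and $u_1\ldots u_m=s_1\ldots s_m$, so in particular $k\ge 1$.
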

 \begin{proof}
 Let $[t_L, t_R)$ and $[t_L', t_R')$ be two $\beta$-Lyndon intervals generated by the $\beta$-Lyndon words $s_1\ldots s_p$ and $s_1'\ldots s'_q$, respectively. Suppose on the contrary that $[t_L, t_R)\cap [t_L', t_R')\ne \emptyset$. Without loss of generality we assume $t_L<t_L'< t_R$. Then by Definition \ref{def:lyndon word-interval} and Lemma \ref{lem:greedy and quasi-greedy}(ii) it follows that 
 \begin{equation*} 
 s_1\ldots s_p0^\f\prec s'_1\ldots s'_q0^\f\prec (s_1\ldots s_p)^\f.
 \end{equation*}
 This implies $q>p$,  $s_1'\ldots   s_p'=s_1\ldots s_p$ and  
 $s'_{p+1}\ldots s_q'0^\f\prec (s_1\ldots s_p)^\f.$ Write $q=N p+r$ with $N\ge 1$ and $0<r\le p$.
 So, either there exists $1\le k<N$ such that 
 \[s_{p+1}'\ldots s_{kp}'=(s_1\ldots s_p)^{k-1}\quad\textrm{and}\quad s_{kp+1}'\ldots s_{(k+1)p}'\prec s_1\ldots s_p,\]
  or 
  \[s_{p+1}'\ldots s_{Np}'=(s_1\ldots s_p)^{N-1}\quad\textrm{and}\quad s_{Np+1}'\ldots s_q'\lle s_1\ldots s_{q-Np}.\]
   Using $s_1'\ldots s_p'=s_1\ldots s_p$ we conclude  in both cases that 
 \[
 s_{j+1}'\ldots s_q'\lle s_1'\ldots s_{q-j}'\quad\textrm{for some }j\in\set{p, p+1,\ldots, q-1}.
 \]
 This is not possible by the definition of a $\beta$-Lyndon word. 
 \end{proof}
  
To describe the Hausdorff dimension of the survivor set 
\[K_\beta(t)=\set{x\in[0, 1): T_\beta^n(x)\ge t~\forall n\ge 0},\]
we recall from \cite[Chapter 4]{Lind_Marcus_1995} the definition of topological entropy for a symbolic set. 
For a set $X\subset\set{0, 1}^\N,$ its \emph{topological entropy} is defined to be 
\[
h(X)=\liminf_{n\to\f}\frac{\log\#B_n(X)}{n},
\]
where $B_n(X)$ is the set of all length $n$ prefixes of sequences from $X$. 

The following characterization of the set-valued bifurcation set $\ee_\beta$ was implicitly given in \cite{Urbanski_1986} (see also \cite[Proposition 2.3]{Kalle-Kong-Langeveld-Li-18}). Furthermore,  the  Hausdorff dimension of $K_\beta(t)$ was implicitly given by Raith in \cite{Raith-89}, and was recently explicitly presented  in  \cite[Equation (2.6)]{Kalle-Kong-Langeveld-Li-18}.

\begin{proposition}\label{prop:property of dim-K(t)}
\begin{enumerate}[{\rm(i)}]
\item  Let $\beta\in(1,2]$. Then 
 \[
 \ee_\beta=\set{t\in[0, 1): T_\beta^n(t)\ge t~\forall n\ge 0}.
 \]

\item Let $\beta\in(1,2]$ and   $t\in[0, 1)$. Then the Hausdorff dimension of $K_\beta(t)$ is given by 
 \begin{equation*}\label{eq:dimension-K}
 \dim_H K_\beta(t)=\frac{h(\K_\beta(t))}{\log\beta},
 \end{equation*}
  where 
 $
 \K_\beta(t):=\set{(x_i)\in\set{0,1}^\N: b(t, \beta)\lle \si^{n}((x_i))\lle \de(\beta)\; \forall n\ge 0}.
 $
 Furthermore, the dimension function $\eta_\beta: t\mapsto \dim_H K_\beta(t)$ is a Devil's staircase, i.e., $\eta_\beta$ is a non-constant, decreasing and  continuous function which is  locally constant almost everywhere in $[0, 1)$.  
 \end{enumerate}
\end{proposition}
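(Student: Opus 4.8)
\emph{Proof strategy.} For part (i), note first that the set on the right-hand side is precisely $\set{t\in[0,1):t\in K_\beta(t)}$, since $t\in K_\beta(t)$ is exactly the statement $T_\beta^n(t)\ge t$ for all $n\ge 0$. The inclusion $\supseteq$ is immediate: if $t\in K_\beta(t)$ then for any $t'>t$ the point $t$ lies in $K_\beta(t)$ but not in $K_\beta(t')$ (the defining inequality already fails at $n=0$), and since $K_\beta$ is non-increasing this gives $K_\beta(t')\subsetneq K_\beta(t)$, so $t\in\ee_\beta$. For the reverse inclusion I would argue the contrapositive. Assume $T_\beta^N(t)<t$ for some $N\ge 1$; by Lemma~\ref{lem:greedy and quasi-greedy}(ii) this says $\si^N(b(t,\beta))\prec b(t,\beta)$, so, writing $b(t,\beta)=c_1c_2\ldots$, the two sequences first disagree at some position $k$, where $c_{N+i}=c_i$ for $1\le i<k$ while $c_{N+k}=0<1=c_k$. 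Since $x\mapsto b(x,\beta)$ is right-continuous on $[0,1)$ (see \cite{DeVries_Komornik_2008}), I may pick $t'\in(t,1)$ so close to $t$ that $b(t',\beta)$ begins with $c_1\ldots c_{N+k}$. I then claim $\K_\beta(t')=\K_\beta(t)$: the inclusion $\subseteq$ is clear, and if some $(x_i)\in\K_\beta(t)$ had a shift with $b(t,\beta)\lle\si^m((x_i))\prec b(t',\beta)$, then $\si^m((x_i))$, being squeezed between two sequences agreeing on their first $N+k$ symbols, would itself begin with $c_1\ldots c_{N+k}$, so that $\si^{m+N}((x_i))$ would begin with $c_{N+1}\ldots c_{N+k}=c_1\ldots c_{k-1}0\prec c_1\ldots c_k$ and hence $\si^{m+N}((x_i))\prec b(t,\beta)$, contradicting $(x_i)\in\K_\beta(t)$. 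Finally, since $x\in K_\beta(s)\Longleftrightarrow b(x,\beta)\in\K_\beta(s)$ for every $s$ — the constraint $\si^n(b(x,\beta))\lle\de(\beta)$ being automatic for $x\in[0,1)$ — and since $x\mapsto b(x,\beta)$ is injective, $\K_\beta(t')=\K_\beta(t)$ forces $K_\beta(t')=K_\beta(t)$, so $t\notin\ee_\beta$. This is essentially the lexicographic argument of Urba\'nski~\cite{Urbanski_1986}; see also \cite[Proposition~2.3]{Kalle-Kong-Langeveld-Li-18}.

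For the dimension formula in part (ii) the plan is the standard three-step reduction to symbolic dynamics. \emph{Coding:} by the equivalence $x\in K_\beta(s)\Longleftrightarrow b(x,\beta)\in\K_\beta(s)$ used above, $K_\beta(t)$ is the image, under the projection $\pi_\beta\colon (x_i)\mapsto((x_i))_\beta$, of the set of greedy sequences in $\K_\beta(t)$, and on greedy sequences $\pi_\beta$ is injective; moreover by Lemma~\ref{lem:greedy and quasi-greedy}(ii) the non-greedy sequences of $\K_\beta(t)$ are exactly those with $\si^n((x_i))=\de(\beta)$ for some $n$, which form a countable set. Hence $\dim_H K_\beta(t)=\dim_H\pi_\beta(\K_\beta(t))$. \emph{Upper bound:} $\pi_\beta$ is Lipschitz for the metric $d((x_i),(y_i))=\beta^{-\min\{i:x_i\ne y_i\}}$, so for each $n$ the cylinders indexed by $B_n(\K_\beta(t))$ project onto $\#B_n(\K_\beta(t))$ sets of diameter $\le C\beta^{-n}$; letting $n\to\f$ and using $\dim_H\le\underline{\dim}_B$ gives $\dim_H\pi_\beta(\K_\beta(t))\le h(\K_\beta(t))/\log\beta$. \emph{Lower bound:} since $K_\beta(t)$ is forward invariant under $T_\beta$, which is piecewise linear with constant slope $\beta$ and whose itinerary partition $\set{[0,1/\beta),[1/\beta,1)}$ induces precisely the greedy coding, Raith's dimension theory for piecewise monotone interval maps \cite{Raith-89} yields $\dim_H K_\beta(t)=h_{\mathrm{top}}(T_\beta|_{K_\beta(t)})/\log\beta=h(\K_\beta(t))/\log\beta$; alternatively one approximates $\K_\beta(t)$ from within by transitive subshifts of finite type $Y_m$ with $h(Y_m)\to h(\K_\beta(t))$, pushes the measure of maximal entropy of $Y_m$ forward under $\pi_\beta$, and applies the mass distribution principle. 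This recovers \cite[Equation~(2.6)]{Kalle-Kong-Langeveld-Li-18}.

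It remains to see that $\eta_\beta$ is a Devil's staircase. It is decreasing because $t'>t$ implies $K_\beta(t')\subseteq K_\beta(t)$. It is non-constant since $\eta_\beta(0)=\dim_H[0,1)=1$ while $K_\beta(t)=\emptyset$, hence $\eta_\beta(t)=0$, for all $t\ge 1/\beta$: indeed $K_\beta(t)\subseteq[t,1)\subseteq[1/\beta,1)$ is forward $T_\beta$-invariant, but if $x\in[1/\beta,1)$ had all iterates in $[1/\beta,1)$ then $T_\beta^j(x)=\beta^j\bigl(x-\frac{1}{\beta-1}\bigr)+\frac{1}{\beta-1}\to-\f$, which is absurd. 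It is locally constant Lebesgue-a.e.\ because the set of points where $\eta_\beta$ fails to be locally constant is contained in $\ee_\beta$ together with its left-sided analogue, which is a Lebesgue null set (this is part of the analysis of \cite{Kalle-Kong-Langeveld-Li-18}, and of \cite{Urbanski_1986} for $\beta=2$; the mechanism is that for a.e.\ $t$ the $T_\beta$-orbit of $t$ returns strictly below $t$, placing $t$ in the interior of a plateau). The genuinely substantial point — the main obstacle — is the \emph{continuity} of $\eta_\beta$, equivalently of the symbolic entropy $t\mapsto h(\K_\beta(t))$ under the moving lexicographic constraint; together with monotonicity of $h$ and the right-continuity of $b(\cdot,\beta)$ (which makes $\K_\beta(t')$ increase to a subset of $\K_\beta(t)$ as $t'\downarrow t$), this reduces to the lower-semicontinuity estimate $\liminf_{t'\downarrow t}h(\K_\beta(t'))\ge h(\K_\beta(t))$ and its left counterpart, for which the plan is the usual one: when $h(\K_\beta(t))>0$, realise entropy arbitrarily close to $h(\K_\beta(t))$ by periodic orbits all of whose shifts lie \emph{strictly} above $b(t,\beta)$, so that they persist in $\K_\beta(t')$ for $t'$ slightly larger than $t$. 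This continuity statement, and the lower bound in the dimension formula for $\beta<2$ where the projected cylinders overlap, are the two places where one leans on \cite{Urbanski_1986}, \cite{Raith-89} and \cite{Kalle-Kong-Langeveld-Li-18}; everything else above is elementary or a direct application of thermodynamic formalism for constant-slope maps.
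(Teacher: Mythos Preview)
The paper does not actually prove this proposition: it is stated with the prefatory sentence that part~(i) ``was implicitly given in \cite{Urbanski_1986} (see also \cite[Proposition~2.3]{Kalle-Kong-Langeveld-Li-18})'' and that the dimension formula ``was implicitly given by Raith in \cite{Raith-89}, and was recently explicitly presented in \cite[Equation~(2.6)]{Kalle-Kong-Langeveld-Li-18}'', and no further argument appears. So there is no in-paper proof to compare against; the proposition functions as a quotation of known results.

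Your sketch is a faithful and essentially correct reconstruction of what those cited sources do. The contrapositive argument for part~(i) via the first disagreement index $k$ and right-continuity of $b(\cdot,\beta)$ is exactly the lexicographic mechanism behind \cite[Proposition~2.3]{Kalle-Kong-Langeveld-Li-18}; your three-step coding/upper-bound/lower-bound outline for the dimension formula is the standard route through Raith's theory; and you correctly flag continuity of $\eta_\beta$ as the only genuinely non-elementary ingredient in the Devil's-staircase claim, deferring it (as the paper does) to \cite{Kalle-Kong-Langeveld-Li-18}. One very minor point: in the step ``pick $t'\in(t,1)$ so close to $t$ that $b(t',\beta)$ begins with $c_1\ldots c_{N+k}$'', it is worth noting explicitly that such $t'>t$ exists because $c_{N+k}=0$, so the cylinder of sequences beginning with $c_1\ldots c_{N+k}$ contains greedy expansions of points strictly to the right of $t$ (e.g.\ any $t'$ with $b(t',\beta)$ beginning $c_1\ldots c_{N+k-1}0c_{N+k+1}'\ldots$ with a larger tail); right-continuity alone does not guarantee the cylinder is non-degenerate on the right. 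Otherwise your proposal supplies strictly more than the paper and is sound.
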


\section{Proof of Theorem \ref{main:1}}\label{sec:proof of th-1}

 In this section we will prove Theorem \ref{main:1}. First we show that the dimension bifurcation set $\bb_\beta$ coincides with the set-valued bifurcation set $\ee_\beta$, we then derive a complete characterization of these sets via the $\beta$-Lyndon intervals. The proof heavily relies upon the transitivity of the symbolic survivor set $\K_\beta(t)$ (see Lemma \ref{lem:transitivity} below).

\begin{proposition}\label{prop:coincidence}
Let $\beta\in(1,2)$ be  a multinacci number. Then 
\[
\bb_\beta=\ee_\beta=\left[0,1-\frac{1}{\beta}\right)\setminus\bigcup[t_L, t_R),
\]
where the union is taken over all $\beta$-Lyndon intervals. 
\end{proposition}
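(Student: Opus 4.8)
\emph{Plan.} The strategy is to establish the two equalities in turn. The second equality, $\ee_\beta = [0,1-1/\beta) \setminus \bigcup [t_L,t_R)$, is combinatorial: by Proposition \ref{prop:property of dim-K(t)}(i), $t \in \ee_\beta$ iff $T_\beta^n(t) \ge t$ for all $n \ge 0$, which by Lemma \ref{lem:greedy and quasi-greedy}(ii) translates into a purely lexicographic condition on $b(t,\beta) = (b_i)$, namely $\si^n((b_i)) \lge (b_i)$ for all $n \ge 0$ (together with $(b_i) \prec \de(\beta)$). One first shows that $\max \ee_\beta \le 1 - 1/\beta$ using the multinacci identity $\de(\beta) = 1^m 0^\f$ (so that $b(1-1/\beta,\beta) = 1 0^\f$ and any admissible shift-maximal sequence must start with $0$ once $t < 1$, pinning the supremum), consistent with Theorem \ref{th:kkll}. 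Then one shows that the complement of $\ee_\beta$ in $[0,1-1/\beta)$ is exactly the union of $\beta$-Lyndon intervals: if $t \notin \ee_\beta$ there is a first place where the shift-minimality fails, and truncating $b(t,\beta)$ at the appropriate coordinate produces a $\beta$-Lyndon word $s_1\ldots s_m$ with $t \in [t_L, t_R)$; conversely Lemma \ref{lem:Lyndon-interval} and the displayed formulas $b(t_L,\beta) = s_1\ldots s_m 0^\f$, $b(t_R,\beta) = (s_1\ldots s_m)^\f$ show every point of a $\beta$-Lyndon interval fails the $\ee_\beta$ condition. This part is essentially a repackaging of the analysis already implicit in \cite{Kalle-Kong-Langeveld-Li-18}.

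\emph{The harder equality} is $\bb_\beta = \ee_\beta$. Since $\eta_\beta$ is decreasing and continuous (Proposition \ref{prop:property of dim-K(t)}(ii)), the inclusion $\bb_\beta \subseteq \ee_\beta$ is immediate: if $K_\beta(t') = K_\beta(t)$ for some $t' > t$ then certainly $\eta_\beta(t') = \eta_\beta(t)$, so $t \notin \bb_\beta$ implies $t \notin \ee_\beta$ is false — rather, $\bb_\beta \subseteq \ee_\beta$ holds because $\eta_\beta(t') \ne \eta_\beta(t)$ forces $K_\beta(t') \ne K_\beta(t)$. For the reverse inclusion $\ee_\beta \subseteq \bb_\beta$, I would argue by contradiction: suppose $t \in \ee_\beta$ but $t \notin \bb_\beta$, so there exists $t' > t$ with $\eta_\beta(t') = \eta_\beta(t)$, i.e. $h(\K_\beta(t')) = h(\K_\beta(t))$ via the dimension formula. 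The key tool is the transitivity of $\K_\beta(t)$ (advertised as Lemma \ref{lem:transitivity}, which I would invoke): for a multinacci $\beta$ and $t \in \ee_\beta$, the subshift $\K_\beta(t)$ is transitive, hence its entropy is strictly monotone under the relevant perturbation — enlarging the lower constraint from $b(t,\beta)$ to $b(t',\beta)$ with $t' > t$, $t \in \ee_\beta$, strictly decreases the entropy of a transitive subshift of finite-type-like structure. Concretely, because $t \in \ee_\beta$ the sequence $b(t,\beta)$ is itself admissible in $\K_\beta(t)$ (it lies in the orbit closure), and one can exhibit, for any $t' > t$, a periodic point of $\K_\beta(t)$ that is \emph{not} in $\K_\beta(t')$ while the transitive structure lets one "fill in" enough words to keep $h(\K_\beta(t))$ bounded below by $h(\K_\beta(t'))$ plus a positive gap — using a variational/counting argument on $\#B_n$. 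This yields $h(\K_\beta(t)) > h(\K_\beta(t'))$, contradicting the assumption. The case $\eta_\beta(t) = 0$ needs separate (easy) handling since the strict-monotonicity-via-transitivity argument degenerates there, but one checks directly that $\{t : \eta_\beta(t) = 0\} \cap \ee_\beta$ is empty or handled by the endpoint $1 - 1/\beta$.

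\emph{Main obstacle.} The crux is the strict entropy drop: showing that for $t \in \ee_\beta$ and any $t' > t$ one has $h(\K_\beta(t')) < h(\K_\beta(t))$. This is where transitivity of $\K_\beta(t)$ is essential — without it, entropy could stay constant across a nontrivial interval (indeed it does on the $\beta$-Lyndon intervals, precisely where transitivity fails). I expect the proof of Lemma \ref{lem:transitivity} itself, and its quantitative consequence (a genuine spectral/counting gap rather than just $\le$), to carry the real weight; the multinacci hypothesis enters here because $\de(\beta) = 1^m 0^\f$ has the eventually-periodic form that makes $\K_\beta(t)$ a sofic-type system amenable to these arguments, whereas for general $\beta$ transitivity can fail and $\ee_\beta$ acquires isolated points.
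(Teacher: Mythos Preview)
Your overall architecture is right, but the step you flag as ``the crux'' has a real gap. You invoke Lemma~\ref{lem:transitivity} as saying that for multinacci $\beta$ and \emph{every} $t\in\ee_\beta$ the subshift $\K_\beta(t)$ is transitive. That is not what the lemma says, and it is not what the paper proves: Lemma~\ref{lem:transitivity} only asserts that $\K_\beta(t_R)$ is a \emph{transitive subshift of finite type} when $t_R$ is the right endpoint of a $\beta$-Lyndon interval, i.e.\ when $b(t_R,\beta)$ is purely periodic. For an aperiodic $t\in\ee_\beta$ (and such points exist---indeed they form a set of full Hausdorff dimension) the symbolic set $\K_\beta(t)$ is not even of finite type, and the strict-entropy-drop principle you want (a proper subshift of a transitive system has strictly smaller entropy) is a theorem about transitive SFTs (Lind--Marcus, Cor.~4.4.9); it fails in general for transitive subshifts that are not SFTs. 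So as written your argument for $\ee_\beta\subseteq\bb_\beta$ works only at the countably many periodic points of $\ee_\beta$.

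The paper closes exactly this gap by splitting into two cases. Case~(I): $b(t,\beta)$ is periodic. Then (via Lemma~\ref{lem:basic inequality}) the minimal period block is a $\beta$-Lyndon word, so $t=t_R$ for some $\beta$-Lyndon interval; now Lemma~\ref{lem:transitivity} applies, $\K_\beta(t_R)$ is a transitive SFT, and the Lind--Marcus corollary gives $h(\K_\beta(t'))<h(\K_\beta(t_R))$ for every $t'>t_R$. Case~(II): $b(t,\beta)=(t_i)$ is aperiodic with $\sigma^n((t_i))\succ(t_i)$ for all $n\ge1$. Here one does \emph{not} argue directly with $\K_\beta(t)$; instead one constructs, for each large $k$, a $\beta$-Lyndon word $t_1\ldots t_{m_k}^{+}$ by flipping a suitable $0$ to $1$, so that the associated right endpoints $s_k$ decrease strictly to $t$. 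Given any $t'>t$, pick $s_k\in(t,t')$ and apply Case~(I) at $s_k$ to get $\eta_\beta(t')<\eta_\beta(s_k)\le\eta_\beta(t)$. This approximation-by-periodic-right-endpoints is the missing idea in your proposal; without it the aperiodic points of $\ee_\beta$ are not handled. (Two minor corrections: for multinacci $\beta$ one has $\de(\beta)=(1^m0)^\infty$, not $1^m0^\infty$, and $b(1-1/\beta,\beta)=01^m0^\infty$, not $10^\infty$.)
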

Observe by Lemma \ref{lem:Lyndon-interval} that the $\beta$-Lyndon intervals are pairwise disjoint. In fact the closed $\beta$-Lyndon intervals $\set{[t_L, t_R]}$ are also pairwise disjoint. So by Proposition \ref{prop:coincidence} it follows that each closed $\beta$-Lyndon interval is a maximal interval where the dimension function $\eta_\beta$ is constant. 

The proof of Proposition \ref{prop:coincidence} will be split into several lemmas.
We fix a multinacci number   $\beta\in(1,2)$ with  $\de(\beta)=(1^m0)^\f$ for some $m\ge 1$. In view of Proposition \ref{prop:property of dim-K(t)}  it is necessary to investigate the symbolic survivor set 
\[\K_\beta(t)=\set{(x_i)\in\set{0, 1}^\N: b(t, \beta)\lle \si^n((x_i))\lle \de(\beta)~\forall n\ge 0}.\]

 \begin{lemma}\label{lem:transitivity}
 Let $\beta\in(1,2)$  with $\de(\beta)=(1^m0)^\f$, and let $[t_L, t_R)\subset[0, 1-1/\beta)$ be a $\beta$-Lyndon interval. Then the set-valued map $t\mapsto \K_\beta(t)$ is constant on   $[t_L, t_R]$, and the set   $\K_\beta(t_R)$ is a transitive subshift of finite type.  
 \end{lemma}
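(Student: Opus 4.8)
The plan is to prove the two assertions separately. Write $s_1\cdots s_\ell$ for the $\beta$-Lyndon word generating $[t_L,t_R)$, so that, by Definition~\ref{def:lyndon word-interval} and Lemma~\ref{lem:greedy and quasi-greedy}(ii), $b(t_L,\beta)=s_1\cdots s_\ell\,0^\f$ and $b(t_R,\beta)=(s_1\cdots s_\ell)^\f$, and recall $\de(\beta)=(1^m0)^\f$. Taking $i=\ell-1$ in the definition of a $\beta$-Lyndon word gives $s_\ell\succ s_1$ when $\ell\ge 2$, so $s_1=0$ and $s_\ell=1$ (the case $\ell=1$ makes $[t_L,t_R)$ empty); combining $s_1=0$ with $\si((s_1\cdots s_\ell)^\f)\prec\de(\beta)$ yields $(s_1\cdots s_\ell)^\f\prec 0\,\de(\beta)$, consistent with $t_R\le 1-1/\beta<1/\beta$, a relation I will use repeatedly. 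For the constancy of $t\mapsto\K_\beta(t)$ on $[t_L,t_R]$: since $x\mapsto b(x,\beta)$ is increasing, $t\mapsto\K_\beta(t)$ is decreasing for inclusion, so it is enough to show $\K_\beta(t_L)\subseteq\K_\beta(t_R)$. Given $(x_i)\in\K_\beta(t_L)$, I claim $\si^n((x_i))\lge(s_1\cdots s_\ell)^\f$ for all $n$; if not, pick $n$ with $s_1\cdots s_\ell\,0^\f\lle\si^n((x_i))\prec(s_1\cdots s_\ell)^\f$. These two bounds agree in their first $\ell$ coordinates, hence $\si^n((x_i))$ begins with $s_1\cdots s_\ell$, and then $\si^{n+\ell}((x_i))$ again lies in $[\,s_1\cdots s_\ell\,0^\f,(s_1\cdots s_\ell)^\f)$ --- its lower bound because $(x_i)\in\K_\beta(t_L)$, its upper bound because $\si^n((x_i))$ and $(s_1\cdots s_\ell)^\f$ share their first $\ell$ coordinates. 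Iterating forces $\si^n((x_i))=(s_1\cdots s_\ell)^\f$, a contradiction. Thus $\K_\beta(t_L)=\K_\beta(t_R)$, and for $t\in[t_L,t_R]$ the inclusions $\K_\beta(t_R)\subseteq\K_\beta(t)\subseteq\K_\beta(t_L)$ close up.

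\emph{$\K_\beta(t_R)$ is a subshift of finite type.} Write $\K_\beta(t_R)$ as the intersection of $\{(x_i):\si^n((x_i))\lle\de(\beta)~\forall n\}$ with $\{(x_i):\si^n((x_i))\lge(s_1\cdots s_\ell)^\f~\forall n\}$. Since $\de(\beta)=(1^m0)^\f$, the first set is exactly the SFT forbidding the block $1^{m+1}$. For the second, a shift with $\si^n((x_i))\prec(s_1\cdots s_\ell)^\f$ is witnessed by a factor consisting of a proper prefix of $(s_1\cdots s_\ell)^\f$ followed by a strictly smaller symbol; since $(s_1\cdots s_\ell)^\f$ has period $\ell$, any such witnessing factor of length exceeding $\ell$ forces a shorter one beginning $\ell$ places later, so the second set is the SFT forbidding the finite family $\{\,s_1\cdots s_{j-1}0:\ 1\le j\le\ell,\ s_j=1\,\}$. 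A finite intersection of SFTs is an SFT.

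\emph{Transitivity of $\K_\beta(t_R)$.} This is the substantial point; I would prove it in the form that for any words $u,v$ occurring in $\K_\beta(t_R)$ there is a word $w$ with $uwv$ occurring. The hub is the periodic point $(s_1\cdots s_\ell)^\f\in\K_\beta(t_R)$, and the backbone is a right-extension lemma: every occurring word $u$ satisfies $u(s_1\cdots s_\ell)^\f\in\K_\beta(t_R)$. Its proof inspects every shift: shifts inside the periodic tail are governed by $\si^n((s_1\cdots s_\ell)^\f)\lge(s_1\cdots s_\ell)^\f$ and $\si^n((s_1\cdots s_\ell)^\f)\prec\de(\beta)$; a shift inside $u$ cannot create $1^{m+1}$, so by legality of $u$ it has either already left $[(s_1\cdots s_\ell)^\f,\de(\beta)]$ within $u$ (making the tail irrelevant) or still matches a prefix of one of the bounds, and matching a prefix of $\de(\beta)$ is harmless because each shift of $\de(\beta)$ dominates $0\,\de(\beta)\succ(s_1\cdots s_\ell)^\f$, while matching a prefix of $(s_1\cdots s_\ell)^\f$ is compatible with the tail by minimality of $(s_1\cdots s_\ell)^\f$ among its shifts. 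Two consequences follow. First, since $s_1\cdots s_\ell$ is a Lyndon word it is unbordered, and the inequalities $s_{i+1}\cdots s_\ell\succ s_1\cdots s_{\ell-i}$ show that after reading any legal word ending in $s_1\cdots s_\ell$ one occupies a state independent of the earlier history, so $s_1\cdots s_\ell$ is intrinsically synchronizing for $\K_\beta(t_R)$; second, by the right-extension lemma $u(s_1\cdots s_\ell)^k$ occurs for every $k$, so every occurring word co-occurs with $s_1\cdots s_\ell$ in a common point. To close, one needs a gluing step: for any $z\in\K_\beta(t_R)$ beginning with $0$ and any occurring word $u$, one has $u(s_1\cdots s_\ell)^k z\in\K_\beta(t_R)$ for all large $k$ --- the only delicate shifts are the finitely many straddling the boundary of the inserted block, and they stay inside the bounds because $z\lle 0\,\de(\beta)$ (since $z$ begins with $0$). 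Routing $u$ through a long block of $s_1\cdots s_\ell$ and then into a point that begins with $0$ and contains $v$ (e.g. $0\,\de(\beta)$ when $v$ is a factor of $(1^m0)^\f$, and otherwise a point produced by the right-extension lemma) then yields $uwv$.

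The genuinely delicate part is this last step: the bounded-window verifications that shifts straddling an inserted periodic block never leave $[(s_1\cdots s_\ell)^\f,\de(\beta)]$, and the bookkeeping needed to realise an arbitrary occurring word $v$ inside a point of $\K_\beta(t_R)$ that begins with $0$. This is where the $\beta$-Lyndon hypotheses --- $s_1=0$, the inequalities $s_{i+1}\cdots s_\ell\succ s_1\cdots s_{\ell-i}$, $\si^n((s_1\cdots s_\ell)^\f)\prec\de(\beta)$, and $(s_1\cdots s_\ell)^\f\prec 0\,\de(\beta)$ --- together with the periodicity $\de(\beta)=(1^m0)^\f$ supplied by the multinacci assumption, and the standing hypothesis $t_R\le 1-1/\beta$, are really used. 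An equivalent and arguably cleaner route is to write down the Parry-type Markov diagram recognising $\K_\beta(t_R)$, whose finitely many vertices record how the current word compares with $(s_1\cdots s_\ell)^\f$ and with $\de(\beta)$, and to verify that, after discarding non-essential vertices, it is strongly connected --- the vertex ``just read $s_1\cdots s_\ell$'' being reachable from every vertex (the right-extension lemma) and reaching every vertex (the gluing step).
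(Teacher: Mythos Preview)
Your arguments for constancy and for the SFT description are correct and parallel the paper's. The transitivity argument, however, contains a genuine error: the ``right-extension lemma'' --- that $u(s_1\cdots s_\ell)^\f\in\K_\beta(t_R)$ for every admissible word $u$ --- is false. Take $m=2$, so $\de(\beta)=(110)^\f$, and the $\beta$-Lyndon word $s_1s_2=01$ (one checks $(01)^\f\prec 0110^\f=b(1-1/\beta,\beta)$, so $[t_L,t_R)\subset[0,1-1/\beta)$). Then $u=010$ occurs in $\K_\beta(t_R)$, being a prefix of $(01)^\f$, yet
\[
\si^2\big(010\,(01)^\f\big)=\si^2(010010101\ldots)=0010101\ldots\prec(01)^\f,
\]
so $010\,(01)^\f\notin\K_\beta(t_R)$. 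The flaw is in the clause ``matching a prefix of $(s_1\cdots s_\ell)^\f$ is compatible with the tail by minimality'': if a suffix of $u$ equals the length-$r$ prefix of $(s_1\cdots s_\ell)^\f$ with $0<r<\ell$, then after this matched prefix you are appending $(s_1\cdots s_\ell)^\f$, which by the Lyndon inequalities is \emph{strictly smaller} than $\si^r((s_1\cdots s_\ell)^\f)$, so the concatenation drops below the lower bound. Minimality points the wrong way here. Your subsequent gluing step $u(s_1\cdots s_\ell)^k z$ inherits the same defect at the $u$--$(s_1\cdots s_\ell)^k$ junction, so the synchronizing/Markov-diagram reformulation does not rescue the argument.

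The paper proves transitivity by routing through the \emph{upper} bound rather than the lower one. Given an admissible word $\ep$ and an admissible sequence $\ga$, it first aligns a suffix of $\ep$ with a prefix of $\de(\beta)=(1^m0)^\f$ (choosing the minimal $i_0$ with $\ep_{i_0+1}\cdots\ep_k=\de_1\cdots\de_{k-i_0}$), continues with $(1^m0)^N$ for $N$ large, and finally absorbs the initial run $1^q0$ of $\ga$ into one more period, producing $\ep_1\cdots\ep_{i_0}(1^m0)^{N+1}\ga_{q+2}\ga_{q+3}\ldots\in\K_\beta(t_R)$. This works because every shift of $(s_1\cdots s_\ell)^\f$ lies strictly below $\de(\beta)$, hence below $(1^m0)^N0^\f$ once $N$ is large; thus a long $(1^m0)$-block never endangers the lower bound, while the upper bound is controlled by the initial alignment with $\de(\beta)$. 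There is no symmetric fix for routing through $(s_1\cdots s_\ell)^\f$: you would need to first extend $u$ so that it ends exactly at a period boundary of $(s_1\cdots s_\ell)^\f$, but guaranteeing such an extension for an arbitrary admissible $u$ is essentially the transitivity you are trying to prove.
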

 \begin{proof}
 Suppose    $[t_L, t_R)$ is a $\beta$-Lyndon interval generated by $s_1\ldots s_p$. Take $t\in[t_L, t_R]$. Then by Lemma \ref{lem:greedy and quasi-greedy}(ii) it follows that  
 \[\K_\beta(t_R)\subseteq \K_\beta(t)\subseteq \K_\beta(t_L).\]
  Observe that $\de(\beta)=(1^m0)^\f$ for some $m\in\mathbb N$. Then 
 \begin{equation}\label{eq:K-constant}
 \begin{split}
  \K_\beta(t_L)&=\set{(x_i): s_1\ldots s_p0^\f\lle \si^n((x_i))\lle (1^m 0)^\f\; \forall n\ge 0}\\
  &=\set{(x_i): (s_1\ldots s_p)^\f\lle \si^n((x_i))\lle (1^m 0)^\f\; \forall n\ge 0}=\K_\beta(t_R).
  \end{split}
  \end{equation}
  Here we use the simple argument that 
  \[\si^n((x_i))\lge s_1\ldots s_p 0^\f ~\forall n\ge 0\quad\Longleftrightarrow\quad \si^n((x_i))\lge (s_1\ldots s_p)^\f ~\forall n\ge 0.\] 
  So, the set-valued map $t\mapsto \K_\beta(t)$ is constant on $[t_L, t_R]$. Furthermore, $\K_\beta(t_R)$ is
 a subshift of finite type with forbidden blocks 
$c_1\ldots c_k$ satisfying  $c_1\ldots c_k0^\f\prec s_1\ldots s_p0^\f$ or $c_1\ldots c_k0^\f\succ (1^m0)^\f,$
 where $k= \max\set{p, m+1}$. It remains to prove the transitivity of $\K_\beta(t_R)$.

Since $[t_L, t_R)\subset[0, 1-\frac{1}{\beta})$, by Lemma \ref{lem:greedy and quasi-greedy} (ii) it follows that $b(t_R, \beta)\prec b(1-\frac{1}{\beta}, \beta)$, which gives
 \begin{equation}\label{eq:kong-0}
(s_1\ldots s_p)^\f\prec 01^m 0^\f.
 \end{equation}
Arbitrarily fix an admissible word $\ep=\ep_1\ldots \ep_k$ and an admissible sequence $\ga=\ga_1\ga_2\ldots$ in $\K_\beta(t_R)$.  We will construct   a word $\nu$ such that $\ep\nu\ga\in\K_\beta(t_R)$.   Observe that $\si^n((s_1\ldots s_p)^\f)\prec (1^m 0)^\f$ for all $n\ge 0$. Thus, there exists  a large integer $N$   such that 
 \begin{equation}\label{eq:kong-1}
 \si^n((s_1\ldots s_p)^\f)\prec (1^m 0)^N0^\f\quad\textrm{for all}\quad n\ge 0.
 \end{equation}
 
 Denote by  $(\de_i):=\de(\beta)=(1^m0)^\f$. Note that $\ep_{i+1}\ldots \ep_{k}\lle \de_1\ldots \de_{k-i}$ for all $0\le i<k$. Let $i_0\in\set{0,1,\ldots, k-1}$ be the smallest index such that  $\ep_{i_0+1}\ldots \ep_{k}=\de_1\ldots \de_{k-i_0}$. If such an index $i_0$ does not exist, then we put $i_0=k$. In either case there exists a word $\mu$ such that $\ep\mu=\ep_1\ldots\ep_{i_0}(1^m0)^N$.  Since $\ga\lle(1^m0)^\f$, there exists $q\in\set{0,1,\ldots, m}$ such that  $\ga$ begins with $\ga_1\ldots\ga_{q+1}=1^q0$.  We claim that 
$
\ep\mu1^{m-q}\ga=\ep_1\ldots\ep_{i_0}(1^m0)^{N+1}\ga_{q+2}\ga_{q+3}\ldots\;\in\K_\beta(t_R),
$
 or equivalently, 
 \begin{equation}\label{eq:kong-2}
(s_1\ldots s_p)^\f\lle \si^n(\ep\mu 1^{m-q}\ga)\lle (1^m0)^\f\quad\textrm{for all }n\ge 0.
 \end{equation}
 
 First we prove the second inequality in (\ref{eq:kong-2}).
 By the definition of $i_0$ it follows that $\si^n(\ep\mu 1^{m-q}\ga)\prec \de(\beta)= (1^m0)^\f$ holds for all $0\le n<i_0$. Furthermore, since $\ga\in\K_\beta(t_R)$, the second inequality in (\ref{eq:kong-2}) also holds for $n\ge |\ep|+|\mu|+m-q$.   For the remaining $n$ we observe  that 
$\si^{i_0}(\ep\mu 1^{m-q}\ga)=(1^{m}0)^{N+1}\ga_{q+2}\ga_{q+3}\ldots$ and $\ga_{q+2}\ga_{q+3}\ldots\in\K_\beta(t_R)$. So    $\si^n(\ep\mu 1^{m-q}\ga)\lle (1^m0)^\f$ for all $i_0\le n<|\ep|+|\mu|+m-q$. This proves the second inequality in (\ref{eq:kong-2}).

For the first inequality in  (\ref{eq:kong-2}) 
we observe that
 $\ep\mu 1^{m-q}\ga=\ep_1\ldots \ep_{i_0}(1^m 0)^N 1^m\ga_{q+1}\ga_{q+2}\ldots$ {and} $ \ga_{q+1}\ga_{q+2}\ldots\in\K_\beta(t_R).$ Then by  (\ref{eq:kong-0}) it follows that $\si^n(\ep\mu 1^{m-q}\ga)\lge(s_1\ldots s_p)^\f$ for all $n\ge i_0$. If $i_0=0$, then we are done. Otherwise,  we take $0\le n<i_0$.  Since $\ep_1\ldots \ep_{i_0}$ is an admissible word in $\K_\beta(t_R)$, we have 
 \[
 \ep_{n+1}\ldots \ep_{i_0}\lge t_1\ldots t_{i_0-n},
 \]
 where $(t_i):=(s_1\ldots s_p)^\f$. The first inequality in (\ref{eq:kong-2}) now holds by (\ref{eq:kong-1}), which tells us that 
   \[(1^m0)^{N}1^m \ga_{q+1}\ga_{q+2}\ldots\succ t_{i_0-n+1}t_{i_0-n+2}\ldots.\] 
     This completes the proof of our claim.
  
Since $\ep$ and $\ga$ are chosen arbitrarily, it follows that $\K_\beta(t_R)$ is transitive.  
  \end{proof}
  
 To prove the coincidence of $\bb_\beta$ and $\ee_\beta$ we still need the following inequalities. 
 \begin{lemma}\label{lem:basic inequality}
 Let $(t_1\ldots t_N)^\f\in\set{0,1}^\N$ be a periodic sequence with   period $N\ge 2$. If 
 \[\si^n((t_1\ldots t_N)^\f)\lge (t_1\ldots t_N)^\f\quad \forall ~n\ge 0,\] then 
 \[
 t_{j+1}\ldots t_N\succ t_1\ldots t_{N-j}\quad \forall ~1\le j<N.
 \]
 \end{lemma}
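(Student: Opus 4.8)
The plan is to exploit two factorisations of the shifted sequences. Write $\mathbf{t}:=(t_1\ldots t_N)^\f$. Since $\mathbf{t}$ has period $N$, for every $1\le j<N$ we have the identity $\si^j(\mathbf{t})=(t_{j+1}\ldots t_N)\,\mathbf{t}$: the first $N-j$ symbols of $\si^j(\mathbf{t})$ are $t_{j+1}\ldots t_N$, and then periodicity makes it resume with $t_1t_2\ldots$; while trivially $\mathbf{t}=(t_1\ldots t_{N-j})\,\si^{N-j}(\mathbf{t})$. These let me play the shift by $j$ against the shift by $N-j$. I will also use that $N$ is the minimal period of $\mathbf{t}$, i.e.\ that $t_1\ldots t_N$ is not a proper power; this hypothesis is genuinely needed, since otherwise the conclusion already fails for $t_1\ldots t_N=0^N$.

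Fix $1\le j<N$ and put $b:=N-j$, so that $u:=t_{j+1}\ldots t_N$ and $v:=t_1\ldots t_{N-j}$ are words of the same length $b\ge 1$. First I would dispose of the case $u\ne v$: then the sequences $\si^j(\mathbf{t})=u\,\mathbf{t}$ and $\mathbf{t}=v\,\si^{b}(\mathbf{t})$ first disagree at some coordinate $i\le b$, and since $\si^j(\mathbf{t})\lge\mathbf{t}$ by hypothesis, the symbol of $\si^j(\mathbf{t})$ at that coordinate is the larger one, so $u\succ v$ (a comparison between two words of equal length, so the $0^\f$-convention is irrelevant). That is precisely the assertion $t_{j+1}\ldots t_N\succ t_1\ldots t_{N-j}$.

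It then remains to rule out $u=v$. Suppose $u=v=:z$. Then $z$ is both a prefix and a suffix of $t_1\ldots t_N$, so $\si^j(\mathbf{t})=z\,\mathbf{t}$ and $\mathbf{t}=z\,\si^{b}(\mathbf{t})$, and the hypothesis $\si^j(\mathbf{t})\lge\mathbf{t}$ becomes $z\,\mathbf{t}\lge z\,\si^{b}(\mathbf{t})$. Cancelling the common prefix $z$ gives $\mathbf{t}\lge\si^{b}(\mathbf{t})$, and together with the hypothesis $\si^{b}(\mathbf{t})\lge\mathbf{t}$ this forces $\si^{b}(\mathbf{t})=\mathbf{t}$, i.e.\ $\mathbf{t}$ has period $b=N-j<N$, contradicting minimality of $N$. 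Hence $u\ne v$ for every admissible $j$, and the lemma follows.

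The only non-routine point is recognising the factorisation $\si^j(\mathbf{t})=(t_{j+1}\ldots t_N)\,\mathbf{t}$ and using it to convert the assumed inequality at shift $j$ into one at shift $N-j$; everything after that is a single lexicographic comparison. (In the language of combinatorics on words the excluded case $u=v$ says that the hypothesis forces $t_1\ldots t_N$ to be a Lyndon word, hence unbordered, but the computation above sidesteps quoting that fact.)
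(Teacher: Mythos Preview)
Your proof is correct and takes a genuinely different route from the paper's. The paper argues by reflection: it observes that the hypothesis forces $\si^n((\overline{t_1\ldots t_N})^\f)\lle(\overline{t_1\ldots t_N})^\f$ for all $n$, invokes Lemma~\ref{lem:greedy and quasi-greedy}(i) to recognise $(\overline{t_1\ldots t_N})^\f$ as $\de(\beta')$ for some $\beta'\in(1,2]$, uses minimality of the period to identify $b(1,\beta')=\overline{t_1\ldots t_{N-1}}\,1\,0^\f$, and then applies Lemma~\ref{lem:greedy and quasi-greedy}(iii) to obtain $\overline{t_{j+1}\ldots t_N}\prec\overline{t_1\ldots t_{N-j}}$, which gives the result after reflecting back. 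Your argument is purely combinatorial and self-contained: the factorisation $\si^j(\mathbf t)=(t_{j+1}\ldots t_N)\,\mathbf t$ together with $\mathbf t=(t_1\ldots t_{N-j})\,\si^{N-j}(\mathbf t)$ lets you read the desired strict inequality directly off the hypothesis when the two length-$(N-j)$ words differ, and a cancellation argument reduces the bordered case $u=v$ to $\si^{N-j}(\mathbf t)=\mathbf t$, contradicting minimality of $N$. The paper's route has the virtue of tying the lemma to the $\beta$-expansion machinery already in play, while yours avoids that machinery entirely and would work verbatim over any finite alphabet; it also makes transparent that the lemma is exactly the statement that the hypothesis forces $t_1\ldots t_N$ to be a Lyndon word.
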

 \begin{proof}
 Note that $N\ge 2$ is the  period of $(t_1\ldots t_N)^\f$, and
  \begin{equation}\label{eq:nov6-1}
  \si^n((t_1\ldots t_N)^\f)\lge (t_1\ldots t_N)^\f\quad \forall ~n\ge 0.
  \end{equation}
  Then  $t_1=0$ and $t_N=1$. 
   Taking the reflection on both sides of (\ref{eq:nov6-1}) it follows that 
  \[
  \si^n((\overline{t_1\ldots t_N})^\f)\lle (\overline{t_1\ldots t_N})^\f\quad\textrm{for all }n\ge 0.
  \]
  Here for a word $c_1\ldots c_k\in\set{0, 1}^k$ its reflection is defined by $\overline{c_1\ldots c_k}:=(1-c_1)(1-c_2)\ldots (1-c_k)$. 
 By Lemma \ref{lem:greedy and quasi-greedy}(i) it follows that $(\overline{t_1\ldots t_N})^\f$ is the quasi-greedy expansion of $1$ for some $\beta'\in(1,2]$, i.e., $\de(\beta')=(\overline{t_1\ldots t_N})^\f$. Since $N$ is the  period of the sequence $\de(\beta')$, we have $b(1, \beta')=\overline{t_1\ldots t_{N-1}}\,1 0^\f$.  So, by Lemma \ref{lem:greedy and quasi-greedy} (iii) it follows that  
  \[
  \overline{t_{j+1}\ldots t_N}\prec\overline{t_{j+1}\ldots t_{N-1}}\,1\lle \overline{t_1\ldots t_{N-j}}
  \] {for all }$1\le j<N.$
 Then the lemma follows by taking the reflection in the above equation.
 \end{proof}

Now we prove the coincidence of the two bifurcation sets.
\begin{lemma}\label{lem:inclusion-C}
Let $\beta\in(1,2)$ with $\de(\beta)=(1^m0)^\f$. Then $\ee_\beta=\bb_\beta$. 
\end{lemma}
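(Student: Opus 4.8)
The plan is to prove the two inclusions $\bb_\beta\subseteq\ee_\beta$ and $\ee_\beta\subseteq\bb_\beta$ separately. The first inclusion is the easy one and is already noted in the introduction: if $\eta_\beta(t')=\eta_\beta(t)$ for all $t'$ in some right neighbourhood of $t$, then in particular the survivor set cannot be changing there either, since $K_\beta(t')\subsetneq K_\beta(t)$ for $t'>t$ would have to be witnessed by a drop (or at least non-increase in a way detected by) the dimension\,---\,more precisely, I would argue contrapositively using Proposition \ref{prop:property of dim-K(t)}(i): if $t\notin\ee_\beta$ then $T_\beta^n(t)<t$ for some $n$, hence $t$ lies in the interior of a maximal interval on which the symbolic survivor set $\K_\beta(\cdot)$ is constant (one pushes $t$ down slightly without changing the admissibility constraint), and on such an interval $\eta_\beta$ is constant by the dimension formula, so $t\notin\bb_\beta$.

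For the reverse inclusion $\ee_\beta\subseteq\bb_\beta$, fix $t\in\ee_\beta$; I must show that $\eta_\beta$ is not eventually constant to the right of $t$, i.e. for every $r>0$ there is $t'\in(t,t+r)$ with $\eta_\beta(t')\ne\eta_\beta(t)$. Since $\eta_\beta$ is decreasing, it suffices to produce $t'>t$ arbitrarily close to $t$ with $\dim_H K_\beta(t')<\dim_H K_\beta(t)$, equivalently (by Proposition \ref{prop:property of dim-K(t)}(ii)) with $h(\K_\beta(t'))<h(\K_\beta(t))$. The key structural input is Lemma \ref{lem:transitivity}: on the closure of each $\beta$-Lyndon interval the map $t\mapsto\K_\beta(t)$ is constant and equals a transitive SFT. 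The plan is to show that if $t\in\ee_\beta$ then $t$ is a limit from the right of right-endpoints (or of points just above right-endpoints) of $\beta$-Lyndon intervals, and that passing from $t$ to a point $t'$ just past such an endpoint strictly decreases the entropy. For the entropy-drop step I would invoke the standard fact that a transitive (irreducible) subshift of finite type properly containing another subshift has strictly larger topological entropy\,---\,here $\K_\beta(t)$ is transitive (it is itself a Lyndon-type SFT, or one approximates $\K_\beta(t)$ from below by such) and any proper subsystem obtained by raising the lower admissibility bound past a Lyndon word has strictly smaller entropy. Combined with Lemma \ref{lem:basic inequality}, which guarantees the relevant periodic words are genuinely $\beta$-Lyndon (the cyclic-shift inequality), this gives the strict inequality $\eta_\beta(t')<\eta_\beta(t)$ for a sequence $t'\downarrow t$.

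To make the approximation explicit: given $t\in\ee_\beta$ with greedy expansion $b(t,\beta)=(b_i)$, for each large $n$ let $w^{(n)}$ be a suitable modification of the prefix $b_1\ldots b_n$ (truncating and adjusting the last few digits so that $w^{(n)}$ becomes $\beta$-Lyndon, using that $\si^k((b_i))\succ b_1\ldots b_{n-k}$ for the relevant $k$ because $t\in\ee_\beta$); let $t_R^{(n)}=\frac{\beta^{|w^{(n)}|}}{\beta^{|w^{(n)}|}-1}((w^{(n)}0^\f))_\beta$ be the right endpoint of the associated $\beta$-Lyndon interval and $t'_n$ a point slightly above it. Then $t'_n\to t$ from the right, and $\K_\beta(t'_n)\subsetneq\K_\beta(t)$ with the containment detected on finitely many coordinates, so transitivity of one side forces $h(\K_\beta(t'_n))<h(\K_\beta(t))$. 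The main obstacle I anticipate is the bookkeeping in the Lyndon-ification of the prefixes $b_1\ldots b_n$: one needs that the condition $t\in\ee_\beta$ (i.e. $\si^n(b(t,\beta))\succeq b(t,\beta)$ for all $n$) can be converted into the two defining inequalities of a $\beta$-Lyndon word for a nearby periodic point, and that the construction can be done with $|w^{(n)}|\to\infty$ so that $t'_n\to t$; getting the strict-entropy step to apply cleanly (ensuring genuine transitivity of $\K_\beta(t)$ or of a good lower approximant, rather than merely of $\K_\beta(t_R^{(n)})$) is the delicate point, and Lemma \ref{lem:basic inequality} together with Lemma \ref{lem:transitivity} is exactly what is designed to handle it.
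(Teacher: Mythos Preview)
Your approach is essentially the paper's. The paper separates into two cases according to whether $b(t,\beta)=(t_i)$ is periodic (i.e.\ some smallest $N$ has $\si^N((t_i))=(t_i)$): in the periodic case, Lemma~\ref{lem:basic inequality} shows $t_1\ldots t_N$ is $\beta$-Lyndon, so $t$ is itself the right endpoint $t_R$ of a Lyndon interval and $\K_\beta(t)$ is the transitive SFT, whence $h(\K_\beta(t'))<h(\K_\beta(t))$ for every $t'>t$ directly; in the aperiodic case one approximates $t$ from above by Lyndon right endpoints $s_k\searrow t$ (your $t_R^{(n)}$), obtained by flipping a suitable $0$ to $1$ in a prefix of $(t_i)$, and then sandwiches.

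Two clarifications. First, the point you flag as ``delicate'' is not: transitivity of $\K_\beta(t_R^{(n)})$ \emph{is} enough, and you never need transitivity of $\K_\beta(t)$ itself. Given $t'>t$, choose $n$ with $t<t_R^{(n)}<t'$; then $\K_\beta(t')\subsetneq\K_\beta(t_R^{(n)})$ (the periodic sequence $(w^{(n)})^\f$ witnesses strictness), and since $\K_\beta(t_R^{(n)})$ is a transitive SFT one gets $h(\K_\beta(t'))<h(\K_\beta(t_R^{(n)}))\le h(\K_\beta(t))$. This is exactly the paper's Case~(II) argument, so your auxiliary points $t'_n$ are unnecessary. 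Second, the opening sentence of your easy direction is backwards: ``$\eta_\beta$ constant $\Rightarrow K_\beta$ constant'' is the \emph{hard} direction. The inclusion $\bb_\beta\subseteq\ee_\beta$ is simply that $K_\beta(t')=K_\beta(t)$ on a right neighbourhood trivially forces $\eta_\beta(t')=\eta_\beta(t)$ there; your ``more precisely'' clause then argues correctly (and is already more than is needed).
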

\begin{proof}
By the definition of the two bifurcation sets it is easy to see that $\bb_\beta\subset\ee_\beta$. So in the following we prove $\ee_\beta\subset\bb_\beta$. 

Let $t\in \ee_\beta$ with $b(t, \beta)=(t_i)$. Then  by Theorem \ref{th:kkll} we have $t\le 1-1/\beta<1/\beta$. This gives $t_1=0$. By Lemmas \ref{lem:greedy and quasi-greedy} (ii) and Proposition \ref{prop:property of dim-K(t)} (i) it follows that  $\si^n((t_i))\lge (t_i)$ for all $n\ge 0$.  Let $N\ge 1$ be the smallest index such that $\si^N((t_i))=(t_i)$. If such an integer $N$ does not exist, then we set $N=\f$. In the following we will prove $t\in\bb_{\beta}$ by considering the following two cases: (I) $N<\f$; and (II) $N=\f$.
  
  Case (I). $N<\f$. We claim that 
  $t_1\ldots t_N$  is a $\beta$-Lyndon word. If $N=1$, then $(t_i)=t_1^\f=0^\f$. It is easy to check that $t_1=0$ is a $\beta$-Lyndon word. In the following we assume $N\ge 2$. 
Since $\si^N((t_i))=(t_i)$, we have   $(t_i)=(t_1\ldots t_N)^\f$. Note that $(t_i)$ is the greedy $\beta$-expansion of $t$. Then by Lemma \ref{lem:greedy and quasi-greedy} (ii) it follows that $\si^n((t_1\ldots t_N)^\f)\prec \de(\beta)$ for all $n\ge 0$. Note that $\si^n((t_1\ldots t_N)^\f)\lge(t_1\ldots t_N)^\f$. Then by Lemma \ref{lem:basic inequality} and the definition of $N,$ it follows that 
  \begin{equation*}
  t_{j+1}\ldots t_N\succ t_1\ldots t_{N-j}\quad\textrm{for all }1\le j<N.
  \end{equation*} 
  So  by Definition \ref{def:lyndon word-interval} we establish the claim.  
  
Hence, $t=((t_1\ldots t_N)^\f)_\beta=t_R$ is the right endpoint of a $\beta$-Lyndon interval generated by $t_1\ldots t_N$. By Lemma \ref{lem:transitivity} it follows that $\K_\beta(t)$ is a transitive subshift of finite type. Observe that for any $t'>t$ we have $\K_\beta(t')\subset\K_\beta(t)$ and     $(t_1\ldots t_N)^\f\in\K_\beta(t)\setminus\K_\beta(t')$.  
Recall by \cite[Corollary 4.4.9]{Lind_Marcus_1995}  that for any transitive subshift of finite type, any proper subshift has strictly smaller topological entropy. Therefore, 
\[h(\K_\beta(t'))<h(\K_\beta(t))\quad \textrm{for any}\quad t'>t.\]
By Proposition \ref{prop:property of dim-K(t)} (ii) this yields $\eta_\beta(t')<\eta_\beta(t)$ for any $t'>t$.  So $t\in\bb_\beta$.   

Case (II). $N=\f$. Then $\si^n((t_i))\succ(t_i)$ for all $n\ge 1$. So $(t_i)$ is not periodic. Observe that  $(t_i)$ begins with digit $0$, and $\si^n((t_i))\prec (1^m0)^\f$ for all $n\ge 0$.   So there exists a subsequence $(m_k)$ of positive integers such that for any $k\ge 1$ we have $t_{m_k}=0$,   and the word $t_1\ldots t_{m_k}^+:=t_1\ldots t_{m_k-1}1$ does not contain $m+1$ consecutive ones. 
Then by noting $t_1=0$ it follows that 
\[\si^n((t_1\ldots t_{m_k}^+)^\f)\prec (1^m 0)^\f\quad \forall ~n\ge 0.
\]  
Since $\si^n((t_i))\lge (t_i)$ for all $n\ge 0$, by Definition \ref{def:lyndon word-interval} it follows that $t_1\ldots t_{m_k}^+$ is a $\beta$-Lyndon word for any $k\ge 1$. Let $s_k:=((t_1\ldots t_{m_k}^+)^\f)_\beta$. Then $s_k$ is the right endpoint of a $\beta$-Lyndon interval generated by $t_1\ldots t_{m_k}^+$. Furthermore, $s_k$ strictly decreases to $t=((t_i))_\beta$ as $k\ra\f$.

So, for any  $t'>t$  we can find $k$ such that $s_k\in(t, t')$. By  the same arguments as in the proof of Case (I) for $s_k$ we conclude that 
\[
\eta_\beta(t')<\eta_\beta(s_k)\le \eta_\beta(t). 
\]
So $t\in\bb_\beta$. This completes the proof. 
\end{proof}

Finally, we describe the bifurcation sets via the $\beta$-Lyndon intervals.
 \begin{lemma}\label{lem:inclusion-B}
 Let $\beta\in(1,2]$ with $\de(\beta)=(1^m0)^\f$. Then 
 \[
 \left[0, 1-\frac{1}{\beta}\right)\setminus\bigcup[t_L, t_R)\subset\ee_\beta.
 \]
 \end{lemma}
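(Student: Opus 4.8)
\emph{Proposed proof.} The plan is to prove the contrapositive: if $t\in[0,1-1/\beta)$ and $t\notin\ee_\beta$, then $t$ lies in some $\beta$-Lyndon interval. Write $b(t,\beta)=(t_i)$; since $t<1-1/\beta<1/\beta$ we have $t_1=0$, and by Lemma \ref{lem:greedy and quasi-greedy}(ii) together with Proposition \ref{prop:property of dim-K(t)}(i), the assumption $t\notin\ee_\beta$ gives some $n\ge 1$ with $\si^n((t_i))\prec(t_i)$. Set
\[
n^*:=\min\set{n\ge 1:\si^n((t_i))\prec(t_i)}.
\]
One checks $n^*\ge 2$: if $n^*=1$ then $\si((t_i))\prec(t_i)$ and, since $t_1=0$, this forces $(t_i)=0^\f$, contradicting $\si(0^\f)=0^\f$. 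The candidate $\beta$-Lyndon word is $w:=t_1\ldots t_{n^*}$, and the goal is to show that $t$ belongs to its $\beta$-Lyndon interval $[t_L,t_R)$, where $b(t_L,\beta)=w0^\f$ and $b(t_R,\beta)=(w)^\f$.

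First I would verify the two conditions of Definition \ref{def:lyndon word-interval}. For the prefix–suffix condition: for $1\le j<n^*$ the minimality of $n^*$ gives $\si^j((t_i))\lge(t_i)$; if one had $t_{j+1}\ldots t_{n^*}=t_1\ldots t_{n^*-j}$, then removing this common prefix from the inequality $\si^j((t_i))\lge(t_i)$ would yield $\si^{n^*}((t_i))\lge\si^{n^*-j}((t_i))\lge(t_i)$ (the last step again by minimality, since $1\le n^*-j<n^*$), contradicting $\si^{n^*}((t_i))\prec(t_i)$; hence $t_{j+1}\ldots t_{n^*}\succ t_1\ldots t_{n^*-j}$. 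For the condition $\si^n((w)^\f)\prec\de(\beta)=(1^m0)^\f$ for all $n\ge 0$: being greedy, $(t_i)$ contains no block $1^{m+1}$, and since $w$ begins with $0$ no run of $1$'s in $(w)^\f$ can cross a period boundary, so $(w)^\f$ contains no block $1^{m+1}$ either; this forces $\si^n((w)^\f)\lle(1^m0)^\f$ for all $n$, and equality for some $n$ would force $(w)^\f$ to be a shift of $(1^m0)^\f$ and hence, as $(w)^\f$ begins with $0$, $(w)^\f=(01^m)^\f$, i.e.\ $(t_i)$ begins with $01^m$. The latter is impossible: $b(1-\tfrac1\beta,\beta)=01^m0^\f$ (the identity already invoked for \eqref{eq:kong-0}, a consequence of the multinacci relation $\beta^{m+2}=2\beta^{m+1}-1$), so $t<1-\tfrac1\beta$ gives $(t_i)\prec 01^m0^\f$. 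Thus $w$ is $\beta$-Lyndon.

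It remains to show $t\in[t_L,t_R)$. The inequality $t\ge t_L$ is immediate, since $b(t,\beta)=w\,\si^{n^*}((t_i))\lge w0^\f=b(t_L,\beta)$. For $t<t_R$ I must show $\si^{n^*}((t_i))\prec(w)^\f$. Let $p\ge 1$ be the smallest index with $t_{n^*+j}=t_j$ for all $j<p$ and $t_{n^*+p}<t_p$ (this exists because $\si^{n^*}((t_i))\prec(t_i)$). Using that $(w)^\f$ has period $n^*$ together with the relations $t_{n^*+j}=t_j$ for $j<p$, a short induction shows $(w)^\f$ and $\si^{n^*}((t_i))$ agree on their first $p-1$ coordinates, while the $p$-th coordinate of $(w)^\f$ equals $t_p$ (directly if $p\le n^*$; and if $p>n^*$ one reduces the index mod $n^*$ and uses $t_p=t_{p-n^*}$, itself a consequence of $t_{n^*+(p-n^*)}=t_{p-n^*}$). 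Since $t_p>t_{n^*+p}$, we conclude $\si^{n^*}((t_i))\prec(w)^\f$, hence $b(t,\beta)=w\,\si^{n^*}((t_i))\prec(w)^\f=b(t_R,\beta)$ and so $t<t_R$. Therefore $t\in[t_L,t_R)$, proving the contrapositive.

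The two steps I expect to be the main obstacles are: (a) excluding the equality $\si^n((w)^\f)=(1^m0)^\f$ in the second Lyndon condition, which is precisely where the hypothesis $t<1-1/\beta$ is used, via the value $b(1-1/\beta,\beta)=01^m0^\f$; and (b) the comparison $\si^{n^*}((t_i))\prec(w)^\f$, where the first-difference index $p$ may exceed $n^*$ and one must carefully exploit the periodicity of $(w)^\f$ to locate that difference.
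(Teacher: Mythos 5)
Your proof is correct and follows essentially the same route as the paper: take $t\in[0,1-1/\beta)\setminus\ee_\beta$, let $N=n^*$ be the minimal index with $\si^N(b(t,\beta))\prec b(t,\beta)$, verify that $t_1\ldots t_N$ is a $\beta$-Lyndon word (minimality of $N$ for the suffix condition; no $1^{m+1}$ block together with $t<1-1/\beta$ and $b(1-1/\beta,\beta)=01^m0^\f$ to exclude equality with $(1^m0)^\f$), and then place $t$ in the resulting interval $[t_L,t_R)$. The only cosmetic differences are that you rule out $n^*=1$ explicitly (the paper treats $N=1$ trivially) and that you prove $\si^{n^*}((t_i))\prec(t_1\ldots t_{n^*})^\f$ by a first-difference-index argument, whereas the paper uses the telescoping chain $(t_i)\prec t_1\ldots t_N(t_i)\prec(t_1\ldots t_N)^2(t_i)\prec\cdots\lle(t_1\ldots t_N)^\f$.
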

 \begin{proof}
 Take  $t\in[0, 1-1/\beta)\setminus\ee_\beta$ with greedy $\beta$-expansion $(t_i)$. Then $t_1=0$. Since $t\notin \ee_\beta$,   by Proposition \ref{prop:property of dim-K(t)} (i) there exists a smallest  positive integer $N$ such that $T_\beta^N(t)<t$, which implies
  \begin{equation}\label{eq:dr-1}
  t_{N+1}t_{N+2}\ldots  \prec (t_i).
  \end{equation}
  We claim that $t_1\ldots t_N$ is a $\beta$-Lyndon word. Clearly, if $N=1$ then $t_1=0$ is a $\beta$-Lyndon word. In the following we assume $N\ge 2$. In view of Definition \ref{def:lyndon word-interval} it suffices to prove  
  \begin{equation}\label{eq:lyndon-ti}
  t_{j+1}\ldots t_N\succ t_1\ldots t_{N-j}\quad\textrm{for all }1\le j<N,
  \end{equation}
  and
  \begin{equation}\label{eq:admissible-ti}
  \si^n((t_1\ldots t_N)^\f)\prec (1^m 0)^\f\quad\textrm{for all }n\ge 0.
  \end{equation}

First we prove (\ref{eq:lyndon-ti}). By the definition of $N$ in (\ref{eq:dr-1}) it follows that 
  \begin{equation}\label{eq:dr-2}
  t_{j+1}t_{j+2}\ldots\lge (t_i)\quad\textrm{for all }1\le j<N,
  \end{equation}
  which implies $t_{j+1}\ldots t_{N}\lge t_1\ldots t_{N-j}$ for all $1\le j<N$. Suppose $t_{j+1}\ldots t_N=t_1\ldots t_{N-j}$ for some $j\in\set{1,2,\ldots, N-1}$. Then by (\ref{eq:dr-1}) and (\ref{eq:dr-2}) it follows that 
  \begin{align*}
  t_{j+1}t_{j+2}\ldots =t_1\ldots t_{N-j}t_{N+1}t_{N+2}\ldots\prec t_1\ldots t_{N-j}t_1t_2\ldots\lle (t_i),
  \end{align*}
  leading to a contradiction with the minimality of $N$. This proves (\ref{eq:lyndon-ti}). 
    
To prove (\ref{eq:admissible-ti}) we observe that $\de(\beta)=(1^m0)^\f$ and $(t_i)$ is the greedy $\beta$-expansion of $t$. Then    $t_1\ldots t_N$ cannot contain $m+1$ consecutive ones. Since $t_1=0$, we  have   $\si^n((t_1\ldots t_N)^\f)\lle (1^m 0)^\f$ for all $n\ge 0$. So to prove  (\ref{eq:admissible-ti}) it remains to prove that $\si^n((t_1\ldots t_N)^\f)\ne(1^m 0)^\f$ for any $n\ge 0$.   
  Suppose  the equality $\si^n((t_1\ldots t_N)^\f)=(1^m 0)^\f$ holds for some $n\ge 0$.   Then using $t_1=0$ it follows    $t_1\ldots t_{m+1}=01^m$.
  This implies $(t_i)\lge 01^m0^\f=b(1-1/\beta, \beta)$. By Lemma \ref{lem:greedy and quasi-greedy} (ii) we have  $t\ge 1-1/\beta$, leading to a contradiction. So the claim follows. 
  
 By the claim there exists a $\beta$-Lyndon interval $[t_L, t_R)$ generated by $t_1\ldots t_N$.   Furthermore,  by (\ref{eq:dr-1}) it follows that 
  \begin{align*}
  (t_i)=t_1\ldots t_Nt_{N+1}t_{N+2}\ldots &\prec t_1\ldots t_N t_1t_2\ldots=(t_1\ldots t_N)^2t_{N+1}t_{N+2}\ldots\\
  &\prec (t_1\ldots t_N)^2t_1t_2\ldots=(t_1\ldots t_N)^3t_{N+1}t_{N+2}\ldots\\
  &\cdots\\
  &\lle (t_1\ldots t_N)^\f.
  \end{align*}
  Therefore, 
  $
  t_1\ldots t_N 0^\f\lle (t_i)\prec (t_1\ldots t_N)^\f,
  $
which gives $t\in[t_L, t_R)$ by Lemma \ref{lem:greedy and quasi-greedy} (ii).  This completes the proof.
 \end{proof}
 
  \begin{proof}[Proof of Proposition \ref{prop:coincidence}]
By Lemmas \ref{lem:inclusion-C} and \ref{lem:inclusion-B} it suffices to prove 
\[
\bb_\beta\subset\left[0, 1-\frac{1}{\beta}\right)\setminus\bigcup[t_L, t_R). 
\]
Note by Lemma \ref{lem:inclusion-C} and Theorem \ref{th:kkll} that $\bb_\beta=\ee_\beta\subset[0, 1-1/\beta]$. In fact we have $\ee_\beta\subset[0, 1-1/\beta)$.   Note that $b(1-1/\beta, \beta)=01^m0^\f$. So $T^{m+1}_\beta(1-1/\beta)<1-1/\beta$. By Proposition \ref{prop:property of dim-K(t)} (i) this implies $1-1/\beta\notin\ee_\beta$. Hence, $\ee_\beta\subset[0,1-1/\beta)$.

In the following it remains to prove $\bb_\beta\cap\bigcup[t_L, t_R)=\emptyset$.  
  If $t\in [t_L, t_R)$, then   by (\ref{eq:K-constant}) it follows that  $\K_\beta(t)= \K_\beta(t_L)= \K_\beta(t_R)$, which gives  $\eta_\beta(t')=\eta_\beta(t)=\eta_{\beta}(t_L)$ for all  $t'\in(t, t_R)$. So, $t\notin\bb_\beta$. This completes the proof.
    \end{proof}
  
As a consequence of Proposition \ref{prop:coincidence} and Theorem \ref{th:kkll} it follows that for $\beta\in(1,2]$  a multinacci number the $\beta$-Lyndon intervals cover $[0, 1-1/\beta)$ up to a Lebesgue null set. 
\begin{corollary}\label{cor:covers}
 Let $\beta\in(1,2]$ be a multinacci number.  
\begin{enumerate}[{\rm(i)}]
\item   The  union of all $\beta$-Lyndon intervals covers $[0, 1-1/\beta)$ up to a Lebesgue null set. Furthermore, for any $t\in\bb_\beta$ and any $r>0$ the interval  $(t, t+r)$ contains infinitely many $\beta$-Lyndon intervals. 

\item $\eta_\beta(t)>0$ if and only if $t<1-1/\beta$. 
\end{enumerate}
\end{corollary}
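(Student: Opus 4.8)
The plan is to deduce both parts from Proposition~\ref{prop:coincidence}, Theorem~\ref{th:kkll} and Lemma~\ref{lem:transitivity}; I fix a multinacci $\beta$, so that $\de(\beta)=(1^m0)^\f$ for some $m\in\N$ (for $\beta=2$ the statements are due to Urba\'nski).

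For the covering statement in (i), I would first use Proposition~\ref{prop:coincidence} to rewrite $[0,1-1/\beta)\setminus\bigcup[t_L,t_R)=\bb_\beta$, so that the claim becomes: $\bb_\beta$ is Lebesgue null. This follows from Proposition~\ref{prop:property of dim-K(t)}(ii): $\eta_\beta$ is locally constant at Lebesgue-a.e.\ point, and if $\eta_\beta$ is constant on a neighbourhood of $t$ then $\eta_\beta(t')=\eta_\beta(t)$ for every $t'$ slightly larger than $t$, so $t\notin\bb_\beta$; hence $\bb_\beta$ lies in a null set.

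For the second statement of (i), fix $t\in\bb_\beta$ and $r>0$; shrinking $r$ I may assume $t+r\le 1-1/\beta$, so $(t,t+r)$ has positive Lebesgue measure and, by the covering just proved together with the pairwise disjointness of the $\beta$-Lyndon intervals (Lemma~\ref{lem:Lyndon-interval}), meets at least one $\beta$-Lyndon interval. It meets infinitely many: if only finitely many met $(t,t+r)$, these finitely many pairwise disjoint half-open intervals would cover $(t,t+r)$ up to a finite set, which would force $t$ to lie in one of them --- impossible, since $\bb_\beta$ is disjoint from every $\beta$-Lyndon interval by Proposition~\ref{prop:coincidence}. The same disjointness shows that any $\beta$-Lyndon interval $[t_L,t_R)$ meeting $(t,t+r)$ has $t_L>t$ (else $t\in[t_L,t_R)$). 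Applying the previous observation to $(t,t+r/2)$ yields infinitely many $\beta$-Lyndon intervals with left endpoints in $(t,t+r/2)$; being pairwise disjoint and bounded, their lengths tend to $0$, so all but finitely many have length $<r/2$ and hence lie entirely inside $(t,t+r)$.

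For (ii), the implication ``$\eta_\beta(t)>0\Rightarrow t<1-1/\beta$'' I would prove by a direct computation at $t=1-1/\beta$: from $b(1-1/\beta,\beta)=01^m0^\f$ and $\de(\beta)=(1^m0)^\f$, any sequence in $\K_\beta(1-1/\beta)$ contains no block $1^{m+1}$ and must follow each $0$ by at least $m$ ones, so $\K_\beta(1-1/\beta)=\set{1^a(01^m)^\f:0\le a\le m}$ is finite, $h(\K_\beta(1-1/\beta))=0$, and by monotonicity of $\eta_\beta$ one gets $\eta_\beta(t)=0$ for all $t\ge 1-1/\beta$. For the converse, let $t<1-1/\beta$; by (i) I may pick $s_0\in(t,1-1/\beta)$ lying in a $\beta$-Lyndon interval $[t_L,t_R)$ generated by a word $w$, and the disjointness of $\bb_\beta$ from $[t_L,t_R)$ together with $t<s_0<t_R$ forces $t_L>t$. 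A short lexicographic comparison shows that $t_L<1-1/\beta$, i.e.\ $w0^\f\prec 01^m0^\f$, already implies $w^\f\prec 01^m0^\f$, hence $t_R<1-1/\beta$. Since $\eta_\beta$ is constant on $[t_L,t_R]$ it suffices to show $\eta_\beta(t_R)>0$. By Lemma~\ref{lem:transitivity}, $\K_\beta(t_R)$ is a transitive subshift of finite type; it contains the periodic point $b(t_R,\beta)=w^\f$, and --- because $t_R<1-1/\beta$ makes $w^\f\prec 01^m0^\f$ strict --- one checks that it also contains $\de(\beta)=(1^m0)^\f$, which lies in a different orbit. A transitive subshift of finite type carrying two distinct periodic orbits is infinite, hence has positive topological entropy, so $\eta_\beta(t)\ge\eta_\beta(t_R)=h(\K_\beta(t_R))/\log\beta>0$.

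The step I expect to be the main obstacle is this last one --- showing $\K_\beta(t_R)$ does not degenerate to a single periodic orbit. Transitivity (Lemma~\ref{lem:transitivity}) reduces the task to producing two distinct periodic points, and the natural candidates are the lexicographic extremes $w^\f$ and $\de(\beta)$ of $\K_\beta(t_R)$; the point requiring care is to verify that the strict inequality $w^\f\prec 01^m0^\f$ coming from $t_R<1-1/\beta$ is precisely what guarantees $\si^n(\de(\beta))\lge w^\f$ for all $n$, i.e.\ that $\de(\beta)\in\K_\beta(t_R)$.
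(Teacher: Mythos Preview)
Your argument is correct, but it takes a noticeably different and more hands-on route than the paper's.

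For (i), the paper simply quotes Theorem~\ref{th:kkll} (that $\ee_\beta$ is Lebesgue null with no isolated points) and reads both assertions off from Proposition~\ref{prop:coincidence}. You instead deduce the nullity of $\bb_\beta$ from the Devil's-staircase property in Proposition~\ref{prop:property of dim-K(t)}(ii), and give an explicit measure-theoretic argument for the ``infinitely many intervals'' claim. That argument is sound: if only finitely many Lyndon intervals met $(t,t+r)$, the one with smallest left endpoint would have to contain $t$ (otherwise an entire open sub-interval $(t,a_1)$ would lie in the null set $\bb_\beta$), contradicting $t\in\bb_\beta$. Your approach avoids appealing to the no-isolated-points fact from \cite{Kalle-Kong-Langeveld-Li-18}.

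For (ii), the paper is extremely brief: from Proposition~\ref{prop:coincidence} and Theorem~\ref{th:kkll} one has $\sup\bb_\beta=1-1/\beta\notin\bb_\beta$, and the dichotomy follows since every $t'\in\bb_\beta$ automatically satisfies $\eta_\beta(t')>\eta_\beta(t'')\ge 0$ for $t''>t'$. You instead compute $\K_\beta(1-1/\beta)$ explicitly as a single periodic orbit, and for the converse exhibit two distinct periodic orbits ($w^\f$ and $\de(\beta)$) in the transitive SFT $\K_\beta(t_R)$ to force positive entropy. This is a legitimate alternative; your verification that $\de(\beta)\in\K_\beta(t_R)$ via $w^\f\prec 01^m0^\f$ is precisely the inequality~(3.2) used in the proof of Lemma~\ref{lem:transitivity}, and your ``short lexicographic comparison'' (that $w0^\f\prec 01^m0^\f$ forces $w^\f\prec 01^m0^\f$ for $\beta$-Lyndon $w$) does go through, using that a $\beta$-Lyndon word must end in $1$.

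One small slip: in the converse of (ii), the claim ``the disjointness of $\bb_\beta$ from $[t_L,t_R)$ \ldots\ forces $t_L>t$'' implicitly assumes $t\in\bb_\beta$, which is not given. This is harmless, however, since all you actually need is $t<t_R$ (immediate from $t<s_0<t_R$) to conclude $\eta_\beta(t)\ge\eta_\beta(t_R)>0$ by monotonicity.
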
  
\begin{proof}
Note by Theorem \ref{th:kkll} that $\ee_\beta$ is a Lebesgue null set with no isolated points. Then (i) follows from Proposition \ref{prop:coincidence} which tells us that $\bigcup[t_L, t_R)=[0, 1-1/\beta)\setminus\ee_\beta$. For (ii) it can be deduced from Proposition \ref{prop:coincidence} and Theorem \ref{th:kkll} that $\sup \bb_\beta=1-1/\beta$ and $1-1/\beta\notin\bb_\beta$. 
\end{proof}

 Now we turn to investigate the local dimension of the bifurcation set $\bb_\beta$. 
 \begin{lemma}
 \label{lem:local dimension}
 Let $\beta\in(1,2]$ with $\de(\beta)=(1^m0)^\f$. Then  
 \[
\lim_{r\to 0}\dim_H(\bb_\beta\cap(t, t+r))=\dim_H K_\beta(t)>0\quad\forall ~t\in\bb_\beta.
 \]
 \end{lemma}
 \begin{proof}
Take $t\in \bb_\beta$. By Proposition \ref{prop:coincidence} we have $t<1-1/\beta$, and then by Corollary \ref{cor:covers} (ii) it gives  $\eta_\beta(t)=\dim_H K_\beta(t)>0$.  
 Note by Proposition \ref{prop:coincidence} and Proposition \ref{prop:property of dim-K(t)} (i) that $
\bb_\beta\cap (t, t+r)=\ee_\beta\cap(t, t+r)\subseteq K_\beta(t)
$ for any $r>0$. Then 
$
\lim_{r\ra 0}\dim_H(\bb_\beta\cap(t, t+r))\le \eta_\beta(t).
$
 So it remains to prove 
 \begin{equation}\label{eq:kong-26}
\lim_{r\ra 0}\dim_H(\bb_\beta\cap(t, t+r))\ge\eta_\beta(t).
 \end{equation}
We prove this now by considering  the  following two cases: (I) $t=t_R$ is the right endpoint of a $\beta$-Lyndon interval; (II) $t\in[0, 1-1/\beta)\setminus\bigcup[t_L, t_R]$. 
 
Case (I). Suppose $t=t_R$ is the right endpoint of a $\beta$-Lyndon interval. 
Let $(t_i)=(t_1\ldots t_p)^\f$ be the greedy $\beta$-expansion of $t_R$. Note that $t_R\in \bb_\beta$. Then by Corollary \ref{cor:covers} (i) there exists a sequence $(t_R^{(n)})\subset \bb_\beta$ such that each $t_R^{(n)}$ is a right endpoint of a $\beta$-Lyndon interval and $t_R^{(n)}\searrow t_R$ as $n\ra\f$. Fix $r>0$. There exists a large integer $N$ satisfying $t_R^{(n)}\in(t_R, t_R+r)$ for all $n\ge N$. Furthermore, by Lemma \ref{lem:greedy and quasi-greedy} (ii) it follows that for each $n\ge N$ there exists an integer  $k_n$ such that the greedy $\beta$-expansion $b(t_R^{(n)}, \beta)$ of $t_R^{(n)}$ satisfies
  \begin{equation}\label{eq:kdr-sss}
  b(t_R^{(n)},\beta)\succ  (t_1\ldots t_p)^{k_n}1^\f. 
  \end{equation}
 Observe by Proposition \ref{prop:coincidence} and Proposition \ref{prop:property of dim-K(t)} (i) that 
  \[\bb_\beta=\ee_\beta=\set{((s_i))_\beta: (s_i)\lle \si^n((s_i))\prec (1^m0)^\f\; \forall n\ge 0}.\]
 So by using  $t_R\in \bb_\beta$,  (\ref{eq:kdr-sss}) and Lemma \ref{lem:greedy and quasi-greedy} (ii) it follows that for any $n\ge N$,
  \begin{equation}\label{eq:hx-1}
 \begin{split}
 \set{\big((t_1\ldots t_p)^{k_n}x_1x_2\ldots\big)_\beta:  x_1\ldots x_{p}=t_1\ldots t_p,~(x_i)\in \K_\beta(t_R^{(n)})}&\subseteq\bb_\beta\cap[t_R, t_R^{(n)})\\
 &\subseteq \bb_\beta\cap [t_R, t_R+r).
 \end{split}
  \end{equation}
  Note by Lemma \ref{lem:transitivity} that $\K_\beta(t_R^{(n)})$ is a transitive subshift of finite type. Then by (\ref{eq:hx-1}) it follows that 
    \[
  \dim_H(\bb_\beta\cap (t_R, t_R+r))\ge\dim_H K_\beta(t_R^{(n)})=\eta_\beta(t_R^{(n)})\quad\textrm{for all }n\ge N.
  \]
  Letting $n\ra\f$ and by the continuity of $\eta_\beta$ we obtain that
  \[
  \dim_H(\bb_\beta\cap(t_R, t_R+r))\ge \eta_\beta(t_R).
  \]
  Since $r>0$ was given arbitrary, letting $r\ra 0$ we conclude that 
  \begin{equation}\label{eq:hx-2}
  \lim_{r\ra 0}\dim_H(\bb_\beta\cap(t_R, t_R+r))\ge \eta_\beta(t_R).
  \end{equation}
  
Case (II).   $t\in [0, 1-\frac{1}{\beta})\setminus\bigcup[t_L, t_R]$. Then  by Corollary \ref{cor:covers} (i)  there exists a sequence $(t_R^{(k)})$ such that each $t_R^{(k)}$ is the right endpoint of a $\beta$-Lyndon interval, and $t_R^{(k)}\searrow t$ as $k\ra\f$. So, for any $r>0$ there exists a sufficiently large integer $k$ such that $t_R^{(k)}\in(t, t+r)$. By (\ref{eq:hx-2}) with $t_R$ replaced by $t_R^{(k)}$ it follows that for any $\ep>0$ there exists $r_k>0$ such that $(t_R^{(k)}, t_R^{(k)}+r_k)\subset(t, t+r)$ and then 
\[
\dim_H(\bb_\beta\cap(t, t+r))\ge\dim_H(\bb_\beta\cap(t_R^{(k)}, t_R^{(k)}+r_k))\ge  \eta_\beta(t_R^{(k)})-\ep.
\]
Letting $r\ra 0$, and then $t_R^{(k)}\ra t$, we conclude by the continuity of $\eta_\beta$ that 
\begin{equation*}
\lim_{r\ra 0}\dim_H(\bb_\beta\cap(t, t+r))\ge \eta_\beta(t)-\ep.
\end{equation*}
Since $\ep>0$ was arbitrary, we obtain $\lim_{r\ra 0}\dim_H(\bb_\beta\cap(t, t+r))\ge \eta_\beta(t)$.
This, together with (\ref{eq:hx-2}), proves   (\ref{eq:kong-26}).
 \end{proof}
 
  \begin{proof}[Proof of Theorem \ref{main:1}]
Let $\beta\in(1,2)$ with $\de(\beta)=(1^m0)^\f$. By Lemma \ref{lem:Lyndon-interval}, Proposition \ref{prop:coincidence} and Lemma \ref{lem:local dimension} it suffices to prove 
\begin{equation}\label{eq:bk-11}
 \set{t\in[0, 1): \lim_{r\to 0}\dim_H(\bb_\beta\cap(t, t+r))=\eta_\beta(t)>0}\subset\bb_\beta.
\end{equation}

Take $t\in[0, 1)\setminus\bb_\beta$. Then by Proposition \ref{prop:coincidence} we have $t\in[1-1/\beta, 1)$ or $t\in[t_L, t_R)$ for some $\beta$-Lyndon interval.  If $t\ge 1-1/\beta$, then $\eta_\beta(t)=0$ by Corollary \ref{cor:covers} (ii).  If $t\in[t_L, t_R)$, then there exists $r>0$ such that  $\bb_\beta\cap(t, t+r)=\emptyset$. This  completes the proof. 
\end{proof}

\begin{proof}[Proof of Corollary \ref{cor:1}]
Note by Proposition \ref{prop:coincidence} that $\ee_\beta\subset[0,1-1/\beta)$. So if $t\ge 1-1/\beta$, then clearly the result  holds by Corollary \ref{cor:covers} (ii). Now let $t\in[0, 1-1/\beta)$.  
Observe by Proposition \ref{prop:property of dim-K(t)} (i) that $\ee_\beta\cap[t,1]\subset K_\beta(t)$. So it suffices to prove 
\begin{equation}\label{eq:cor-1}
\dim_H(\ee_\beta\cap[t, 1])\ge \dim_H K_\beta(t).
\end{equation}
If $t\in[0, 1-1/\beta)\setminus[t_L, t_R)$, then (\ref{eq:cor-1}) follows by   Lemma \ref{lem:local dimension}. If $t\in[t_L, t_R)$, then we still have (\ref{eq:cor-1}) by using Lemma \ref{lem:local dimension} that 
\begin{align*}
\dim_H(\ee_\beta\cap[t, 1])\ge\dim_H(\ee_\beta\cap[t_R, 1])\ge\dim_H K_\beta(t_R)=\dim_H K_\beta(t),
\end{align*} 
where the last equality holds by (\ref{eq:K-constant}).
\end{proof}

\section{Final remarks}
The main results obtained in this paper can be easily modified to study the following analogous bifurcation sets:
\begin{align*}
&\ee_\beta':=\set{t\in[0, 1): K_\beta(t')\ne K_\beta(t)~\forall t'\ne t},\\
&\bb_\beta':=\set{t\in[0,1): \dim_H K_\beta(t')\ne\dim_H K_\beta(t)~\forall t'\ne t}.
\end{align*}
If $\beta\in(1,2]$ is a multinacci number, one can show that 
\begin{align*}
\bb_\beta'&=\ee_\beta'=\left[0, 1-\frac{1}{\beta}\right)\setminus\bigcup[t_L, t_R]\\
&=\set{t\in[0, 1): \lim_{r\to 0}\dim_H(\ee_\beta\cap(t-r, t))=\lim_{r\to 0}\dim_H(\ee_\beta\cap(t, t+r))=\dim_H K_\beta(t)>0},
\end{align*}
where the union is taken over all pairwise disjoint closed $\beta$-Lyndon intervals. 

Observe that the main result Theorem \ref{main:1}  holds under the assumption that $\beta\in(1,2]$ is a multinacci number, i.e., $\de(\beta)=(1^m0)^\f$ for some $m\in\N$. The method used in this paper can be adapted to show that Theorem \ref{main:1} still holds for $\beta\in(1,2]$ with $\de(\beta)=(10^m)^\f$. It is worth  mentioning that in \cite{Kalle-Kong-Langeveld-Li-18} Kalle et al.~considered a general Farey word base $\beta$, i.e., $\de(\beta)=(s_1\ldots s_p)^\f$ with $s_ms_{m-1}\ldots s_2s_1$ a non-degenerate Farey word. They showed that for a general Farey word base $\beta\in(1,2),$ the set-valued bifurcation set $\ee_\beta$ has no isolated points and Theorem \ref{th:kkll} holds. We finish by posing the following conjecture.
\begin{conjecture}
Let $\beta\in(1,2]$. Then $\bb_\beta=\ee_\beta$ if and only if $\ee_\beta$ has no isolated points. 
\end{conjecture}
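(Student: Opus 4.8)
One implication is immediate: if $\bb_\beta=\ee_\beta$ then, since $\eta_\beta$ is continuous, $\bb_\beta$ has no isolated points (as observed in Section~\ref{s1}), hence neither does $\ee_\beta$. The content is therefore the converse, which I would reduce as follows. Because $\bb_\beta\subseteq\ee_\beta$ always, and because $\eta_\beta$ is decreasing by Proposition~\ref{prop:property of dim-K(t)}(ii) --- so that $t_0\in\bb_\beta$ simply means $\eta_\beta(t')<\eta_\beta(t_0)$ for every $t'>t_0$ --- it suffices to prove: \emph{if $\ee_\beta$ has no isolated points, then for each $t_0\in\ee_\beta$ one has $\eta_\beta(t')<\eta_\beta(t_0)$ for all $t'>t_0$}. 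By Proposition~\ref{prop:property of dim-K(t)} this amounts to upgrading the strict inclusion $\K_\beta(t')\subsetneq\K_\beta(t_0)$ --- which is precisely the statement $t_0\in\ee_\beta$ --- to a strict drop $h(\K_\beta(t'))<h(\K_\beta(t_0))$ in topological entropy, for every $t'>t_0$.

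The case of multinacci $\beta$ is Theorem~\ref{main:1}; for the general case I would run the argument of Section~\ref{sec:proof of th-1} in reverse, without the multinacci hypothesis, around two general facts about $\beta$-Lyndon intervals. \emph{Fact~(A):} if $\ee_\beta$ has no isolated points, then every $t_0\in\ee_\beta$ is the limit of a strictly decreasing sequence $t_R^{(k)}\searrow t_0$ of right endpoints of $\beta$-Lyndon intervals. Lemmas~\ref{lem:Lyndon-interval} and~\ref{lem:basic inequality} already hold for all $\beta\in(1,2]$, and the construction in Case~(II) of Lemma~\ref{lem:inclusion-C} --- terminating admissible prefixes of $b(t_0,\beta)$ by a single $1$ at suitable cutting positions --- produces $\beta$-Lyndon words whose right endpoints decrease to $t_0$; the no-isolated-points hypothesis, together with the plateau structure of $\eta_\beta$, is what should force this sequence to be genuinely non-stationary (for an isolated $t_0$ of the kind exhibited in \cite{Kalle-Kong-Langeveld-Li-18} it collapses). \emph{Fact~(B):} for every $\beta\in(1,2]$ and every right endpoint $t_R$ of a $\beta$-Lyndon interval, $\K_\beta(t_R)$ is transitive and \emph{entropy minimal}, i.e.\ every proper subshift of it has strictly smaller topological entropy. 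Granting (A) and (B): fix $t_0\in\ee_\beta$ and $t'>t_0$, and choose $k$ with $t_R^{(k)}\in(t_0,t')$; then $t'>t_R^{(k)}\in\ee_\beta$ forces $\K_\beta(t')\subsetneq\K_\beta(t_R^{(k)})$, whence $h(\K_\beta(t'))<h(\K_\beta(t_R^{(k)}))\le h(\K_\beta(t_0))$ by Fact~(B) and the monotone inclusion $t\mapsto\K_\beta(t)$, so $t_0\in\bb_\beta$. This is exactly the mechanism of Lemmas~\ref{lem:inclusion-C} and~\ref{lem:local dimension}, with Fact~(B) replacing Lemma~\ref{lem:transitivity} together with \cite[Corollary 4.4.9]{Lind_Marcus_1995}.

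The hard part will be Fact~(B) for $\beta$ whose quasi-greedy expansion $\de(\beta)$ is \emph{not} eventually periodic. There $\K_\beta(t_R)$ is no longer a subshift of finite type, so \cite[Corollary 4.4.9]{Lind_Marcus_1995} is unavailable, and entropy minimality genuinely can fail for transitive subshifts in general; one must therefore prove directly both the transitivity --- replacing the ``fill with $1^{m-q}$'' bridge of Lemma~\ref{lem:transitivity} by bridges built from long prefixes of $\de(\beta)$, which is delicate precisely because $\de(\beta)$ is aperiodic --- and the entropy drop. For the latter I would argue from $\K_\beta(t_R)=\set{(x_i)\in\set{0,1}^\N:\ b(t_R,\beta)\lle\si^n((x_i))\lle\de(\beta)~\forall n\ge 0}$, with $b(t_R,\beta)$ periodic, comparing the word counts $\#B_n(\K_\beta(t_R))$ and $\#B_n(Y)$ for a proper subshift $Y$ that omits some admissible word $w$: the self-noncomparability of the $\beta$-Lyndon block should let one splice, at a positive fraction of the positions of a long admissible word, a short detour realizing $w$, producing an exponential gap in word counts and hence $h(Y)<h(\K_\beta(t_R))$. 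A second difficulty is intrinsic to the hypothesis of the conjecture: the class of $\beta$ for which $\ee_\beta$ has no isolated points is at present understood only for Farey-word bases \cite{Kalle-Kong-Langeveld-Li-18}, so a combinatorial characterization of this class in terms of $\de(\beta)$ --- needed both to see that the hypothesis can hold beyond that class and to make Fact~(A) rigorous --- would likely have to be developed in parallel.
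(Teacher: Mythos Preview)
This statement is posed in the paper as an open \emph{conjecture}; the paper offers no proof of it, so there is nothing to compare your proposal against. Your forward implication is correct and is exactly the observation made in Section~\ref{s1}: continuity of $\eta_\beta$ forces $\bb_\beta$ to have no isolated points, so if $\bb_\beta=\ee_\beta$ then $\ee_\beta$ has none either.

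For the converse, what you have written is a research outline, not a proof, and you are candid about this. Your reduction to Facts~(A) and~(B) is the natural extrapolation of the paper's multinacci argument, and the mechanism in your penultimate paragraph is sound \emph{once those facts are granted}. But both are genuinely open. For Fact~(A), the Case~(II) construction in Lemma~\ref{lem:inclusion-C} relies on the explicit shape $\de(\beta)=(1^m0)^\f$ to certify that the truncated-and-incremented word $t_1\ldots t_{m_k}^+$ satisfies $\si^n((t_1\ldots t_{m_k}^+)^\f)\prec\de(\beta)$ for all $n\ge 0$; for arbitrary $\beta$ there is no obvious choice of ``suitable cutting positions'' that guarantees this, and the no-isolated-points hypothesis by itself does not supply one. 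You also need, in the periodic case, a sequence of \emph{other} right endpoints decreasing to a given $t_R$, which in the paper comes from Corollary~\ref{cor:covers}(i) and hence ultimately from the full structure theorem --- it is not a direct consequence of $\ee_\beta$ having no isolated points. For Fact~(B), you correctly note that when $\de(\beta)$ is not eventually periodic the subshift $\K_\beta(t_R)$ is not of finite type, so \cite[Corollary~4.4.9]{Lind_Marcus_1995} is unavailable; your splicing heuristic for the entropy drop is plausible in spirit, but it is essentially a specification-type argument, and $\beta$-shifts are known to fail specification for generic $\beta$, which is precisely the obstruction such a splicing must overcome. In short: the strategy is the right one to try, the obstacles you name are the real ones, and the conjecture remains open.
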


\section*{Acknowledgements}
The authors were supported by an LMS Scheme 4 grant. The first author was supported by EPSRC grant EP/M001903/1. The second author was supported by NSFC No.~11401516, and by the Fundamental Research Funds for the Central Universities No.~2019CDXYST0015. He wishes to thank the Mathematical Institute of Leiden University.


\end{document}